\newtheorem{theorem}{Theorem}[section]
\newtheorem{lemma}[theorem]{Lemma}
\newtheorem{corollary}[theorem]{Corollary}
\newtheorem{conjecture}{Conjecture}
\newtheorem{assumption}{Assumption}
\newtheorem{remark}{Remark}
\newcommand{\secref}[1]{\S \ref{#1}}
\newcommand{\algref}[1]{Algorithm \ref{#1}}
\newcommand{\figref}[1]{Figure \ref{#1}}
\newcommand{\asref}[1]{Assumption \ref{#1}}
\title{A positivity preserving convergent event based asynchronous PDE solver}
\author{Daniel Stone \and Gabriel Lord}
\begin{document}
\maketitle
\begin{abstract}
A new numerical scheme for conservation equations based on evolution by asynchronous discrete events is presented. During each event of the scheme only two cells of the underlying Cartesian grid are active, and an event is processed as the exact evolution of this subsystem. This naturally leads to and adaptive scheme in space and time. Numerical results are presented which show that the error of the asynchronous scheme decreases to zero as a control parameter is reduced. The construction of the scheme allows it to be expressed as repeated multiplications of matrix exponentials on an initial state vector; thus techniques such as the Goldberg series and the Baker--Campbell--Hausdorff (BCH) formula can be used to explore the theoretical properties of the scheme. We present the framework of a convergence proof in this manner.
\end{abstract}

\section{Introduction}
We develop and present analysis of new schemes for the simulation of
porous media flow based on an asynchronous simulation methodology;
that is, one in which different parts of the spatial domain ar
allowed to exist at different times simultaneously during the course
of the simulation. Specifically the schemes are example of the
Discrete Event Simulation (DES) methodology, in which the system is
evolved forward in time by discrete events, local in space, with each
event having its own timestep determined by the local physical
activity in the region \cite{OK_plasma, OK_flux,
  async_gas_discharge}. 
In our case we take this to be the magnitude of the local flux. The schemes we
develop are intended to be applied to advection-diffusion type
conservation law systems,
\begin{equation}
\frac{dc(\mathbf{x},t)}{dt} = \nabla f(c(\mathbf{x},t)),  \qquad t \in \mathbb{R}, \qquad \mathbf{x} \in \Omega \subset \mathbb{R}^d,
\label{con_no_source}
\end{equation} 
$d= 1,2,3$, where $c(\mathbf{x},t)$ is a concentration and $f$ is a
given flux function. An initial condition $c(\mathbf{x},0) =
c_0(\mathbf{x})$ is provided. We
consider `no flow' boundary conditions, that is, Neumann type boundary
conditions with zero flux on external faces, however other types of boundary
conditions can easily be added in this framework without much
difficulty. 

The idea behind these schemes is essentially simple, but
unusual. Consider the spatial domain $\Omega$ discretised in a way
standard in the finite volume methodology, with approximate fluxes
defined on every face. The evolution of the system forward in time
proceeds by means of discrete local events: the transfer of mass
across a single face, between the two adjacent cells. Which faces are
given priority for events, and the timescales of the events, are to be
functions of the flux across the face - in general, a flux of greater
magnitude has the effect of sooner and shorter events. 

Although initially developed for discrete systems, DES has been applied in 
\cite{OK_plasma, OK_flux,  async_gas_discharge} for plasma simulation,
one-dimensional conservation laws and gas discharge with high levels
of accuracy and efficiency. The methods in these papers are cell based.
\cite{Async1,mythesis} introduced the Basic Asynchronous
Scheme (BAS), a face based method that we use to  
compare to an improved method - the Exact Asynchronous Scheme (EAS)
that we consider here. EAS is improved in that it is automatically positivity preserving
and more accurate, as will be seen, for example in \secref{numerical} and \secref{pos-sec}.

The faced based DES approaches BAS and EAS have some similarities
with the approximate Riemann solvers of Roe, see
\cite{roe1981approximate} and, for example, \cite{toro2009riemann}. In
a Roe-type solver, the spatial discretisation is viewed as producing a
series of Riemann problems (i.e., a conservation equation with
discontinuous initial data), one at each face in the grid. Each
Riemann problem can be approximately solved by introducing a matrix
approximation at the face with certain properties. 

The paper is arranged as follows. In \secref{general_form} we describe the general form of the schemes and give an overview of the finite volume discretisation upon which they are based. We also introduce a way of expressing the full finite volume system as an accumulation of simpler subsystems; that is, of expressing the finite volume discretisation matrix $L$ in as an accumulation of `connection matrices' which represent the discretisation of two-cell artificial subsystems. These are the subsystems on which discrete events take place, and the corresponding connection matrices will be crucial in the analysis in \secref{an_sec_as_1}. \\
In \secref{general_form} we present the new schemes in detail. In \secref{numerical} we present numerical results. Discussion of the observed properties and some steps towards analysis are presented in \secref{an_sec_as_1}.


\section{The General Form of the Schemes}
\label{general_form}

Consider a spatial domain $\Omega$ discretised into a cartesian grid
of cells, each with 
a unique index $j \in \{ 1,2,\ldots, J \} = \mathcal{C}$.
Similarly let every face also have a unique index $k \in  \{
1,2, \ldots K \} = \mathcal{F}$. 
For a cell with index $j \in
\mathcal{C}$, define the set $\mathcal{F}_j$ of faces belonging to the
cell, where $\mathcal{F}_j \subset \mathcal{F}$. Also, define the set
of associated faces $\mathcal{\tilde{F}}_k$ of a face $k$ as
follows. If face $k$ is adjacent to cells $j_1, j_2 \in \mathcal{C}$,
then  
$$
\mathcal{\tilde{F}}_k = \mathcal{F}_{j_1}\cup \mathcal{F}_{j_2},
$$
i.e., the associated faces is the set of all the faces of the two
cells which face $k$ is adjacent to. 

The algorithms work on the principle that each face $k$ in the grid
has an independent time and update time value. The update time is
connected to the current time, the flux across the face, and a global
control parameter which we call the mass unit $\Delta M$. 

The evolution of the system proceeds by a sequence of discrete events in which an amount of mass $\delta M < \Delta M$ is transferred across a single face $k$. Which face is chosen for an event is described in \secref{algsec}. We will refer to the face currently undergoing an event as the active face.

Let $f_k$ be
the approximation of the flux on a face $k \in \mathcal{F}$, which
depends upon the concentration values $c_{j_1}$, $c_{j_2}$ in the two
cells  with indexes $j_1,j_2 \in \mathcal{C}$ adjacent to face
$k$. The concentration $c_j$ of a cell $j$ is assumed constant
throughout the cell, and is derived from the mass in the cell $m_j$
and its volume $V_j$ as $c_j = \frac{m_j}{V_j}$. The flux $f_k$ on a
face is assumed constant and defines the flow of mass across the face
between its two adjacent cells, i.e., the flow of mass from cell $j_1$
due to face $k$ will be $-f_k A_k$; and into cell $j_2$ will be be
$f_k A_k$, where $A_k$ is area of the face $k$. The direction of mass
flow depends on the sign on $f_k$. To be explicit, the equations for
mass flow \emph{across a single face} $k$, are 
\begin{equation}
\frac{dm_{j_1}}{dt} = f_k A_k, \qquad \frac{dm_{j_2}}{dt} = -f_k A_k. 
\label{mass_face_flux}
\end{equation}
Let $\bar{D}_k$ be an approximation of the diffusivity at the face
based on the diffusivity in the two cells (typically the harmonic mean
of $D_{j_1}$ and $D_{j_2}$) and let $\Delta x_k$ be the distance
between the two cell centroids. 
For the advection-diffusion system \eqref{con_no_source}, one of the
two cells will be the upwind cell; without loss of generality let this
be cell $j_1$. Then the flux may be approximated by finite differences
\begin{equation}
  f_k = \frac{\bar{D}_k
    \left(\frac{m_{j_2}}{V_{j_2}}-\frac{m_{j_1}}{V_{j_1}}
    \right)}{\Delta x_k} - \frac{m_{j_1}}{V_{j_1}} v , 
  \label{fd_ad_flux}
\end{equation}
where $v$ is the scalar product of the velocity at the centre of face
$k$ with the unit vector in the direction of the line from the centre
of cell $j_1$ to cell $j_2$. 

The total rate of change of mass, and thus concentration in a cell $j$
is the sum of \eqref{mass_face_flux} for each $k \in
\mathcal{F}_{j}$. This can be expressed as a matrix, $L$ which gives
the finite volume semidiscretisation of (\ref{con_no_source}) as a
system of ODEs,  
\begin{equation}
\frac{d  \mathbf{c}}{dt}=L  \mathbf{c},  \qquad L \in \mathbb{R}^{J \times J}
\label{fv_disc}
\end{equation}
where $\mathbf{c} = (c_1, c_2, \ldots , c_J)^T$ is the vector of
concentrations in cells. In a standard finite volume based
implementation \eqref{fv_disc} is then discretised in time, resulting
in the fully discrete approximation. In contrast, face based
asynchronous schemes do not form the global system (\ref{fv_disc}) but
are based on events involving the transfer of mass across a single
face.

Consider a single face $k$ in the finite volume grid. Let the two
cells that this face neighbours be referred to as $j_1$ and $j_2$. 
Now, consider a sparse matrix $L_k$ with nonzero elements at
$(j_1,j_1)$, $(j_1,j_2)$, $(j_2,j_1)$ and $(j_2,j_2)$; 
\begin{equation}
L_k \equiv
\left(
\begin{array}{ccccc}
 & & & & \\
 &-a_k & &b_k & \\
 & & & & \\
  &a_k & &-b_k & \\
 & & & & \\
\end{array}
\right)
\in \mathbb{R}^{J \times J}.
\label{sparse_c_mat}
\end{equation}
Let $\mathbf{m}$ be the vector of all mass values in the system and
$\mathbf{c}$ the vector of all concentration values in the system,
related by $\mathbf{c}=\mathbf{m} \mathbf{V}$, where $\mathbf{V}$ is
the diagonal matrix with entries $\frac{1}{V_j}$, i.e., the inverse of
the volume in each cell. 
The connection matrix $L_k$ for the face $k$ is such
that the finite volume discretisation matrix $L$ in (\ref{fv_disc})
can be accumulated from the connection matrices on each face, that is, 
\begin{equation}
L = \sum_{k \in \mathcal{F}}{L_k}.
\label{L_as_sum}
\end{equation}

This is achieved if the  scalars $a_k,b_k$ are defined as functions of
the diffusivity $D_j$ and velocity $v_j$ of the two cells, the
distance between their centres, and the area of the face $k$, as
follows. Recalling equations \eqref{mass_face_flux}  
and \eqref{fd_ad_flux}, we see that, if $j_1$ is the upwind cell,
we should set $a$ and $b$ to, 
$$
a_k = \bar{D}_k \frac{1}{V_{j_1} \Delta x_k} + v, \qquad b_k = \bar{D}_k \frac{1}{V_{j_2} \Delta x_k},
$$
or, if $j_2$ is the upwind cell,
$$
a_k = \bar{D}_k \frac{1}{V_{j_1} \Delta x_k} , \qquad b_k = \bar{D}_k \frac{1}{V_{j_2} \Delta x_k} + v,
$$
where $v$ is the scalar product of the velocity at the centre of the
face, with the unit vector in the direction of the line connecting the
centres of the two cells, pointing from the upwind into the downwind
cell. Note that this ensures that $a_k$ and $b_k$ are both
non-negative, since $\bar{D}_k$ and $v$ are both non-negative.

The structure of the connection matrix reflects the conservation of
mass between the two adjacent cells (since the column sum is
zero). The connection matrix $L_k$ associated with face $k$ describes
the relationship between the two cells $j_1$ and $j_2$ adjacent to
face $k$ in the discretisation \eqref{fv_disc}, and thus has nonzero
entries only in columns and rows $j_1$ and $j_2$. Since our
asynchronous schemes operate on a single face (and thus pair
of cells) at a time, connection matrices will prove to be valuable for
re-expressing and analysing our schemes. 

It can be shown that for each $L_k$ there exists a $\hat{\mathbf{z}}_k \in \mathbb{R}^{J}$ such that, for any $\mathbf{x}  \in \mathbb{R}^{J}$,
$$
L_k \mathbf{x} = (b_k x_{j_2} - a_k x_{j_1}) \hat{\mathbf{z}}_k,
$$
where $\hat{\mathbf{z}}_k$ has nonzero entries only at its $j_1$ and $j_2$ positions, and one of these entries is $1$ and the other $-1$. This will be useful later.

Following from the outline at the start of this section,
during a single event only the two cells adjacent to the active face
$k$ is updated. We temporarialy consider the entire system as only
having activity across the face $k$. That is, instead of the system  
$$
\frac{d \mathbf{m}(t)}{dt}  = L  \mathbf{m}(t),
$$
we consider the much simpler system
\begin{equation}
\frac{d \mathbf{m}(t)}{dt}  = L_k  \mathbf{m}(t).
\label{LkSystem}
\end{equation}
It is the reduction of the whole system to the artificially isolated
subsystem \eqref{LkSystem} for discrete events that characterises the
new schemes; and it is the method of solving or approximating
\eqref{LkSystem} which distinguishes them.

\subsection{BAS - Basic Asynchronous Scheme}
\label{bassec}
The simplest face based asynchronous scheme we call BAS, or the Basic Asynchronous scheme, introduced in \cite{Async1}. As stated above, in each event an amount of mass $\delta M \leq \Delta M$ is passed across the active face $k$. In this scheme, we simply ensure that $\delta M = \Delta M$. We will now describe the relations between the time, update time, flux and $\Delta M$.  Let the current individual time of the face $k$ be $t_k$, and its current calculated update time be $\hat{t}_k$. Let $\Delta t = \hat{t}_k  - t_k$ be the timestep associated with face $k$'s next event.

A standard Euler type, approximate solution to \eqref{LkSystem} is,
\begin{equation}
\mathbf{m}(t+\Delta t) \approx  \mathbf{m}(t) + \Delta t L_k  \mathbf{m}(t),
\label{LkSolEuler}
\end{equation}
for the time interval $\Delta t$. In equation \eqref{LkSolEuler} only two cells, the ones adjacent to $k$, are updated, due to the sparsity of $L_k$. Without loss of generality, let the mass in the first cell be $m_{j_1}(t)$ and the second be $m_{j_2}(t)$, and let the first nonzero entry in $\hat{\mathbf{z}}_k$ be $+1$ and the second be $-1$. The evolution of specific cells from the approximation \eqref{LkSolEuler} is 
\begin{equation}
\begin{split}
&m_{j_1}(t+\Delta t) = m_{j_1}(t) +  \Delta t |f_k(t)| A_k \\
& m_{j_2}(t+\Delta t) = m_{j_2}(t) -  \Delta t |f_k(t)| A_k .
\label{BAS_indiv}
\end{split}
\end{equation}
This is equivalent to transferring a mass $ \delta M =  \Delta t |f_k(t)| A_k $ between the two cells. We ensure that $\delta M = \Delta M$ by controling $\Delta t$. We calculate this as 
\begin{equation}
\Delta t  = 
\begin{cases}
& \frac{\Delta M }{|f_k| A_k} \mbox{  if $t_k + \frac{\Delta M }{|f_k| A_k}   \leq T$} \\
& T - t_k \mbox{  otherwise,}
\end{cases}
\label{utime2}
\end{equation}
where the second case is to ensure that the faces synchronise to the desired final time $T$ at the end of the simulation. The first case in \eqref{utime2} is from an Euler-type approximation of \eqref{mass_face_flux};
\begin{equation}
\frac{\delta M}{\Delta t} \approx |\frac{dm}{dt}| = |f_k|A_k; \quad \Delta t \approx \frac{\delta M }{|f_k| A_k} ,
\label{approx-time}
\end{equation}
which provides an approximation of the timescale $\Delta t$ in which $\delta M$ of mass would pass through the face, assuming $\delta M$ small and $f_k$ constant. To obtain \eqref{utime2} we simply set $\delta M = \Delta M$ in \eqref{approx-time}.

Combining \eqref{utime2} with \eqref{BAS_indiv} we see that the first case in \eqref{utime2} indeed gives $\delta M = \Delta t |f_k(t)| A_k  = \Delta M$. When the second  case in \eqref{utime2} is used, an appropriate value $\delta M < \Delta M$ is used instead, again making use of the approximation \eqref{approx-time}: since $\Delta t$ is fixed in this case, we simply approximate $\delta m = \Delta t |f_k|  A_k  $. The general rule for BAS is,
\begin{equation}
\delta m = 
\begin{cases}
&\Delta M \mbox{  if $t^k + \frac{\Delta M }{|f_k| A_k} \leq T$} \\
&  |f_k| (T- t_k) A_k \mbox{  otherwise.}
\end{cases}
\label{del_m}
\end{equation}
Numerical solutions and preliminary analysis of BAS may be found in \cite{Async1}.

\subsection{EAS}
The improved sheme presented here is the the exact-mass asynchronous scheme, or EAS. It differs from BAS in that for the mass transfered is based on an exact solution to the artificially isolated subsystem \eqref{LkSystem}. The timestep update rules are the same as for BAS - that is, \eqref{utime2} is used in EAS.  We now show the form of the exact solution to \eqref{LkSystem}.

\subsubsection{Exact Solution of the Artificially Isolated Subsystem}

A connection matrix acting on any vector produces a vector pointing in only one direction in the solution space. That is, the action of a connection matrix $L_k$ on any vector $\mathbf{x}$ is a scalar multiple of a vector $\hat{\mathbf{z}}_k$, determined by $L_k$. Consider a connection matrix $L_k$ with non-empty columns and rows $j_1$, $j_2$, then
\begin{equation}
L_k \mathbf{x} =
(b_k x_{j_2} -a_k x_{j_1}) \hat{\mathbf{z}}_k,
\label{con_fix_dir}
\end{equation} 
where $\hat{\mathbf{z}}_k = (0, \ldots, 0, 1, 0, \ldots, 0, -1, 0,\dots, 0)^T $, where the non-zero entries are at $j_1$ and $j_2$. It follows that $\hat{\mathbf{z}}_k$ is an eigenvector of $L_k$ and the corresponding eigenvalue can be found,
\begin{equation}
L_k \hat{\mathbf{z}}_k = \lambda_k \hat{\mathbf{z}}_k \qquad \lambda_k = -(a_k+b_k),
\label{evalue_con_mat}
\end{equation}
thus the eigenvalue $\lambda_k$ is negative. From this we can prove the following Lemma.
\begin{lemma}
Let $L_k$ be a connection matrix corresponding to face $k$, which has adjacent cells $j_1$, $j_2$. Let $\hat{\mathbf{z}}_k$ be the eigenvector of $L_k$ be with eigenvalue $\lambda_k$, according to \eqref{evalue_con_mat}. 
Then, for any scalar $s$ and vector $\mathbf{x}$,
\begin{equation}
(e^{sL_k}-I)\mathbf{x} = s(b_k x_{j_2} -a_k x_{j_1})  \varphi_1(-s(a_k+b_k)) \hat{\mathbf{z}}_k,
\label{phi_con_mats_eq}
\end{equation}
where the $\varphi-$function $\varphi_1(\cdot)$ is defined (see for example \cite{NW}) as
\begin{equation}
\varphi_1(z) = z^{-1}\left( e^{z} - I \right) = \sum_{i=0}^{\infty}{\frac{z^i}{(i+1)!}}.
\label{phi1-1}
\end{equation}
\label{phi_con_mats}
\end{lemma}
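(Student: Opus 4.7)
The plan is to expand the matrix exponential as a Taylor series, reduce each power $L_k^i \mathbf{x}$ to a scalar multiple of $\hat{\mathbf{z}}_k$ using the two identities already established, and recognise the resulting scalar series as $\varphi_1$. Concretely, I would start from
\begin{equation*}
(e^{sL_k}-I)\mathbf{x} = \sum_{i=1}^{\infty}\frac{s^i}{i!}L_k^i\mathbf{x},
\end{equation*}
so the problem reduces to evaluating $L_k^i \mathbf{x}$ for all $i\geq 1$.

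Next, I would peel off one factor of $L_k$ using \eqref{con_fix_dir}:
\begin{equation*}
L_k^i \mathbf{x} = L_k^{i-1}\bigl(L_k\mathbf{x}\bigr) = (b_k x_{j_2}-a_k x_{j_1})\,L_k^{i-1}\hat{\mathbf{z}}_k.
\end{equation*}
The point of this step is that it converts the action on a general vector into the action on the eigenvector $\hat{\mathbf{z}}_k$. By \eqref{evalue_con_mat} and induction, $L_k^{i-1}\hat{\mathbf{z}}_k = \lambda_k^{i-1}\hat{\mathbf{z}}_k = (-(a_k+b_k))^{i-1}\hat{\mathbf{z}}_k$, so every power collapses to a scalar multiple of $\hat{\mathbf{z}}_k$.

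Substituting back, the sum becomes
\begin{equation*}
(e^{sL_k}-I)\mathbf{x} = (b_k x_{j_2}-a_k x_{j_1})\,\hat{\mathbf{z}}_k\sum_{i=1}^{\infty}\frac{s^i\lambda_k^{i-1}}{i!} = s(b_k x_{j_2}-a_k x_{j_1})\,\hat{\mathbf{z}}_k\sum_{j=0}^{\infty}\frac{(s\lambda_k)^{j}}{(j+1)!},
\end{equation*}
after factoring out one $s$ and re-indexing $j=i-1$. The final step is simply to identify the remaining series with $\varphi_1(s\lambda_k)=\varphi_1(-s(a_k+b_k))$ via the defining expansion \eqref{phi1-1}, which yields the claimed formula.

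There is no real obstacle here: the argument is essentially bookkeeping on a rank-one-like action, and the only minor care needed is (i) justifying the interchange of $L_k^{i-1}$ with the scalar $(b_k x_{j_2}-a_k x_{j_1})$, which is immediate by linearity, and (ii) handling convergence of the series, which is automatic since the matrix exponential series converges absolutely for every matrix $L_k$ and every real $s$. The index shift from $i\geq 1$ to $j\geq 0$ is what produces the $(j+1)!$ in the denominator and hence the $\varphi_1$ structure, so this is the one spot to execute carefully.
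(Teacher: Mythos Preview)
Your proposal is correct and follows essentially the same route as the paper: expand the exponential as a Taylor series, use \eqref{con_fix_dir} to convert $L_k\mathbf{x}$ into a scalar multiple of $\hat{\mathbf{z}}_k$, apply the eigenvalue relation \eqref{evalue_con_mat} to collapse the remaining powers, and shift the index to recognise $\varphi_1(s\lambda_k)$. If anything, your write-up is slightly more explicit than the paper's about the eigenvector induction step and the convergence justification.
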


\begin{proof}
We have that
$$
(e^{sL_k} - I)\mathbf{x} = \sum_{i=1}^{\infty}{\frac{(sL_k)^i}{i!} }\mathbf{x} =  s(b_k x_{j_2} -a_k x_{j_1})  \sum_{i=1}^{\infty}{\frac{(s L_k)^{i-1}}{i!} } \hat{\mathbf{z}}_k.
$$
Since $\hat{\mathbf{z}}_k$ is an eigenvalue of $L_k$, the sum becomes a scalar sum of powers of the eigenvalue $\lambda_k$, and we can shift the index to get
$$
(e^{sL_k} - I)\mathbf{x} =  s(b_k x_{j_2} -a_k x_{j_1})  \sum_{i=0}^{\infty}{\frac{(s\lambda_k)^{i}}{(i+1)!} } \hat{\mathbf{z}}_k.
$$
The series is $\varphi_1(s \lambda_k)$, by \eqref{phi1-1}, and using \eqref{evalue_con_mat} we have \eqref{phi_con_mats_eq}.
\end{proof}

Define the parameter $ \Delta \hat{M}_{k, j_1, j_2} (s)$ to be,
\begin{equation}
 \Delta \hat{M}_{k, j_1, j_2}= s(b_k x_{j_2} -a_k x_{j_1})  \varphi_1(-s(a_k+b_k)),
\label{dme}
\end{equation}

As a consequence of Lemma \ref{phi_con_mats} we can write
\begin{equation}
e^{sL_k}\mathbf{x} = \mathbf{x} +  \Delta \hat{M}_{k, j_1, j_2} \hat{\mathbf{z}}_k(s),
\label{phi_con_2}
\end{equation}

The exact
solution to \eqref{LkSystem} can be re-expressed using
\eqref{phi_con_2}; 
\begin{equation}
 \mathbf{m}(t+\Delta t) =  e^{ t L_k}\mathbf{m}(t) = \mathbf{m}(t)+
 \Delta \hat{M}_{k, j_1, j_2}(\Delta t) \hat{\mathbf{z}}_k. 
\label{EASApprox}
\end{equation}
Since the only entries in the matrix $ \hat{\mathbf{z}}_k$ are $-1$
and $1$, the exact solution to the matrix equation \eqref{LkSystem}
can be attained by the scalar computation of  $\Delta \hat{M}_{k, j_1, j_2}(\Delta t)$ .

\subsubsection{Presentation of EAS}
In equations \eqref{EASApprox} and \eqref{LkSolEuler} only two cells, the ones adjacent to $k$, are updated, due to the sparsity of $L_k$. Without loss of generality, let the mass in the first cell be $m_{j_1}(t)$ and the second be $m_{j_2}(t)$, and let the first nonzero entry in $\hat{\mathbf{z}}_k$ be $+1$ and the second be $-1$. Then the evolution in \eqref{EASApprox} is,
\begin{equation}
\begin{split}
&m_{j_1}(t+\Delta t) = m_{j_1}(t) +  \Delta \hat{M}_{k, j_1, j_2}(\Delta t) \\
& m_{j_2}(t+\Delta t) = m_{j_2}(t) -  \Delta \hat{M}_{k, j_1, j_2}(\Delta t) .
\label{EAS_indiv}
\end{split}
\end{equation}
Analagously to \eqref{del_m}, the rule for $\delta m$ for EAS is,
\begin{equation}
\delta m = 
  \hat{M}_{k, j_1, j_2}(\Delta t) ,
\label{del_m_EAS}
\end{equation}
note that we do not have a second case for when $\Delta t = T - t_k$, since the function $\Delta \hat{M}_{k, j_1, j_2}(\Delta t)$ calculates an appropriate value for $\delta m$ in this case.

It can be shown that $\Delta \hat{M}_{k, j_1, j_2}(\Delta t) \leq \Delta M$, satisfying a stipulation of the schemes stated at the start of \secref{bassec}. Note that from the construction of $L_k$, we have that $|f_k(t)| A_k = |(b_k x_{j_2} -a_k x_{j_1}) |$ (as the action of $L_k$ on $\mathbf{m}$ is to calculate the contribution from face $k$ to the total rate of change of $\mathbf{m}$, which is the the flux times the area). Comparing this with \eqref{dme}, we have that 
\begin{equation}
 \Delta \hat{M}_{k, j_1, j_2}(\Delta t)= \Delta M \varphi_1(- \Delta t (a_k+b_k)).
\label{dme2}
\end{equation}
Note that the argument of $\varphi_1$ in \eqref{dme2} is that it is negative; it can be shown that for $x < 0$ then $\varphi_1(x)$ positive and less than one, so that $\Delta \hat{M}_{k, j_1, j_2}(\Delta t) \leq \Delta M$ as desired when $\Delta t$ is chosen as described. 

\section{Event Ordering and Full Algorithm For BAS and EAS}
\label{algsec}
Each face $k$ in the discretisation of $\Omega$ possesses an individual time. Using \eqref{utime2}, a timestep $\Delta t$ can be calculated for each face. We define the update time of a face $k$ as the sum of $t_k$ and its current calculated timestep,
\begin{equation}
\hat{t}_k = 
\begin{cases}
&t_k + \frac{\Delta M }{|f_k| A_k} \mbox{  if this $ \leq T$} \\
& T \mbox{  otherwise.}
\end{cases}
\label{utime3}
\end{equation}
In implementation we we keep track of $\hat{t}_k$ for every face. The face for the next event is simply chosen as the one with the smallest $t_k$. There are two reasons why a face may have a respectively smaller update time - its actual time $t_k$ may be smaller, indicating it has had on average less opportunity to update - and it may have a small calculated timestep, indicating a greater rate of activity (flux) on that face. Heuristically, both reasons would suggest that a face with smaller update time should be given greater priority for events. \\
The faces all need to be synchronised to a final time $T$ at the end of the solve, so the calculated timestep is not used when it would prevent this. Simply, if the update time of a face is greater than $T$, then $T$ is used instead. This is the second case in equations \eqref{utime2} and \eqref{utime3}. For the scheme EAS, the modified $\Delta t$ automatically results in an appropriate value of $\delta M$ via \eqref{dme2}. 

After a face is selected for an event, the amount of mass to transfer $\delta M$ is calculated by \eqref{del_m} or \eqref{del_m_EAS}, and the correct direction of transfer is calculated from the sign of the flux. After the transfer, the time $t_k$ of the face $k$ is updated to $\hat{t}_k$. Since the calculated flux of all the faces of the two cells adjacent to face $k$, including $k$ itself, are then changed, so are the calculated timesteps for each of these faces, and so are the update times. Thus, after an event, these values are re-calculated for the set of associated faces of $k$, and the cycle then repeats. To make the process of finding the face with lowest $\hat{t}_k$ efficient, an editable priority queue is used; see the appendix of \cite{mythesis}.


\begin{algorithm}
\caption{Pseudo code for the asynchronous scheme BAS/EAS}
\label{alg1}
\begin{algorithmic}[1]
\STATE{Data: Grid structure, Initial concentration values, $\Delta M$, $T$}
\STATE{Initialise: $t =0$ ; Calculate $f_l$ from \eqref{fd_ad_flux} and $\hat{t}_l $ from \eqref{utime3} $\forall \mbox{ faces } k $ }
\WHILE{$t \leq T$}

\STATE{Find face $k$ s.t. $\hat{t}_k = \min_{l \in \mathcal{F}}{\hat{t}_l}$  }
\STATE{Get cells $j_1$ and $j_2$ adjacent to $k$}
\STATE{$\Delta m = |f_k| (\hat{t}_k - t_k) A_k$ }
\STATE{Calculate $\delta m$ using \eqref{del_m} or \eqref{del_m_EAS}}
\STATE{  $m_{j_1} \leftarrow m_{j_1}- \mbox{sign} (f_k)\delta m$ }
\STATE{ $m_{j_2} \leftarrow m_{j_2}+ \mbox{sign} (f_k)\delta m$ }
\STATE{ $t = t_k \leftarrow  \hat{t}_k$ }
 \FOR{ $l \in \mathcal{\tilde{F}}_k$}
\STATE{Recalculate $f_l$ from \eqref{fd_ad_flux} }
\STATE{  Recalculate $\hat{t}_l$ from \eqref{utime3} }
\ENDFOR
\STATE{Choose $k$ s.t. $\hat{t}_k = \min_{\mbox{faces }l}{\hat{t}_l}$  }
\ENDWHILE
\RETURN $T$
\end{algorithmic}
\end{algorithm}

\algref{alg1} describes the general asynchronous method. After initialising the required values on all faces, the update loop is run until every face is synchronised to the desired final time of $T$. Each iteration of the loop is a single event and proceeds as follows. First the face with the lowest projected update time $\hat{t}$ is found (line 3) Then the two cells adjacent to this face are located from the grid structure (line 4). The amount of mass to transfer between these cells is calculated (line 5). This equation simply returns the global mass unit $\Delta M$ in most cases, except when the face is being forced to use an update time $T$; see equation (\ref{utime3}). Mass is transferred between the cells in the correct direction (lines 6-7). A loop (lines 8-12) updates the faces of cells $j_1$ and $j_2$; recalculating their fluxes and update times based on the new mass values. The loop continues by finding the next face with the lowest uptime (back to line 3). \\

\section{Numerical Results}
\label{numerical}

For the first two experiments we solve the advection-diffusion equation, 
\begin{equation}
\frac{dc(\mathbf{x},t)}{dt} =  \nabla^2 D(\mathbf{x}) (c(\mathbf{x},t)) + \nabla \mathbf{v}(\mathbf{x}) c(\mathbf{x},t).
\label{ad_dif_full_c3}
\end{equation} 
For the third experiment the we add a reaction term to \eqref{ad_dif_full_c3}. Our new schemes were implemented in C++ and used in Matlab through the Mex interface. The Matlab Reservoir Simulation Toolkit (MRST \cite{mrst_prime}) was used to generate the grids for the experiments but the discretisation and solver routines were implemented by us. For the comparison solves exponential integrators were used \cite{Overview, hochbruck-rkei-2005, cm, tambue2010exponential, tambue2013efficient, NW}, which for linear systems like the advection-diffusion equation are known to be effectively exact. Error was measured using the discrete approximation of the $L^2$ norm. Timings were performed using Matlab's tic and toc commands and thus the units of cputime below are all in seconds.

\subsection{High Peclet number fracture}
\label{fracsec}

In this example a single layer of cells is used, making the problem effectively two dimensional. The domain is $\Omega = 10 \times 10 \times 10$ metres, divided into $100 \times 100$ cells of equal size. Thus each cell has volume $0.1 m^3$ and each internal face has area $1 m^2$. The PDE to be solved is \eqref{ad_dif_full_c3}, and the diffusivity and velocity fields were prepared as follows. \\
A fracture in the domain is represented by having a line of cells which we give certain properties. These cells were chosen by a weighted random walk through the grid (weighted to favour moving in the positive $y$-direction so that the fracture would bisect the domain). This process started on an initial cell which was marked as being in the fracture, then randomly chose a neighbour of the cell and repeated the process. In this example, the fracture cells differs from the rest of the domain in that they have different permeability values. This has an effect on the pressure and thus velocity fields in the domain, as given by the standard Darcy's Law relations. 
We use a permeability matrix of the form
$$
\mathbf{K} = 
\left(
\begin{array}{cc}
k_x & 0 \\
0 & k_y
\end{array}
\right).
$$
We set $k_x = 1$ in all cells except on the $x=10$ boundary where it was set to zero, and $k_y = 2000$ on the fracture cells and $k_y = 1$ in all other cells, except on the $y=10$ boundary where it was set to zero. See \figref{frac-flow-setup} a).
This is intended to cause a large velocity in the y-direction inside the fracture, and a small velocity elsewhere. The steady-state Darcy equation is used to find the pressure field. 
Dirchlet boundary conditions were imposed; with $p(x,0) = 1$, $p(x,10) = 0$, and no-flow conditions on the other edges. That is, high pressure along the $y=0$ edge of the domain and low pressure at the $y=10$ edge of the domain, creating a gradient. We then approximate the solution to the steady state Darcy equation (a finite volume discretisation was used to produce a linear system which was then solved). The resulting pressure field can be seen in \figref{frac-flow-setup} b). 
We used a finite difference approximation of the Darcy equation. 
(with $\phi$, $\mu$ set to one and $\mathbf{g}$ set to zero) to find the resulting velocity field. The x- and y- velocities can be seen in \figref{frac-flow-setup} c) and d), on a logarithmic scale. 
The resulting y-velocity is extremely high in the fracture compared to elsewhere in the domain; this is expected to produce highly localised activity.  Some streamlines are shown in \figref{frac-flow-setup} e); here the flow is in the y-direction, i.e. bottom to top in the plot. \\
In every cell we set the diffusivity $D = 0.01$. A measure of the relative importance of advection compared to diffusion in a cell $k$ is the P\'eclet number, $Pe_k$. We show the logarithm of the  P\'eclet number in \figref{frac-flow-setup} f). It varies by five orders of magnitude in the domain. \\
With the velocity and diffusivity fields thus prepared, we approximate the advection-diffusion equation \eqref{ad_dif_full_c3} on this domain. Zero Neumann boundary conditions (`no flow') were applied on every boundary, and the initial condition was $c(\mathbf{x})=0$ everywhere except for $\mathbf{x} = (5.95, 0.05)^T$, where $c(\mathbf{x})=1$. That is, there is a single cell with concentration $1$ on the bottom edge of the domain, close to the fracture. The final time was $T=17$. The final solution approximated with EAS using $\Delta M = 10^{-8}$ is shown in \figref{frac-flow-show} a). In \figref{frac-flow-show} b)we show heat maps of the logarithm of the number of events experienced by a cell during the solve. We consider a cell to have had an event if one of its faces has an event. These plots shows how the activity is localised, as the number of events varies in seven orders of magnitude between a large part of the domain and the fracture. 
In \figref{frac-flow-plotsMain} we show convergence and parameter relations. Plot a) shows the estimated error against $\Delta M$; we observe a roughly first order convergence of the error, as well as the more advanced EAS being generally more accurate than BAS. In plot b) we plot the (average over the whole run) timestep $\Delta t$ against the estimated error, again an approximately first order convergence is observed. Plot c) shows the total number of events over the solve $N$ against the mass unit $\Delta M$, here we observe not only a strong $N = O(\Delta M^{-1})$ relation, but also that the number of events $N$ for the two different schemes converges towards each other as $\Delta M \rightarrow 0$. This may indicate the existence of some sort of preferred path or ordering of events to which both schemes converge, however this conjecture is left until further work for exploration. Plot d) shows the average timestep $\Delta t$ against $N$ and strongly indicates that $\Delta t = O(N^{-1})$.

\begin{figure}[h]
\centering
\begin{minipage}[b]{0.45\linewidth}
a) \\
\includegraphics[width=0.99\columnwidth]{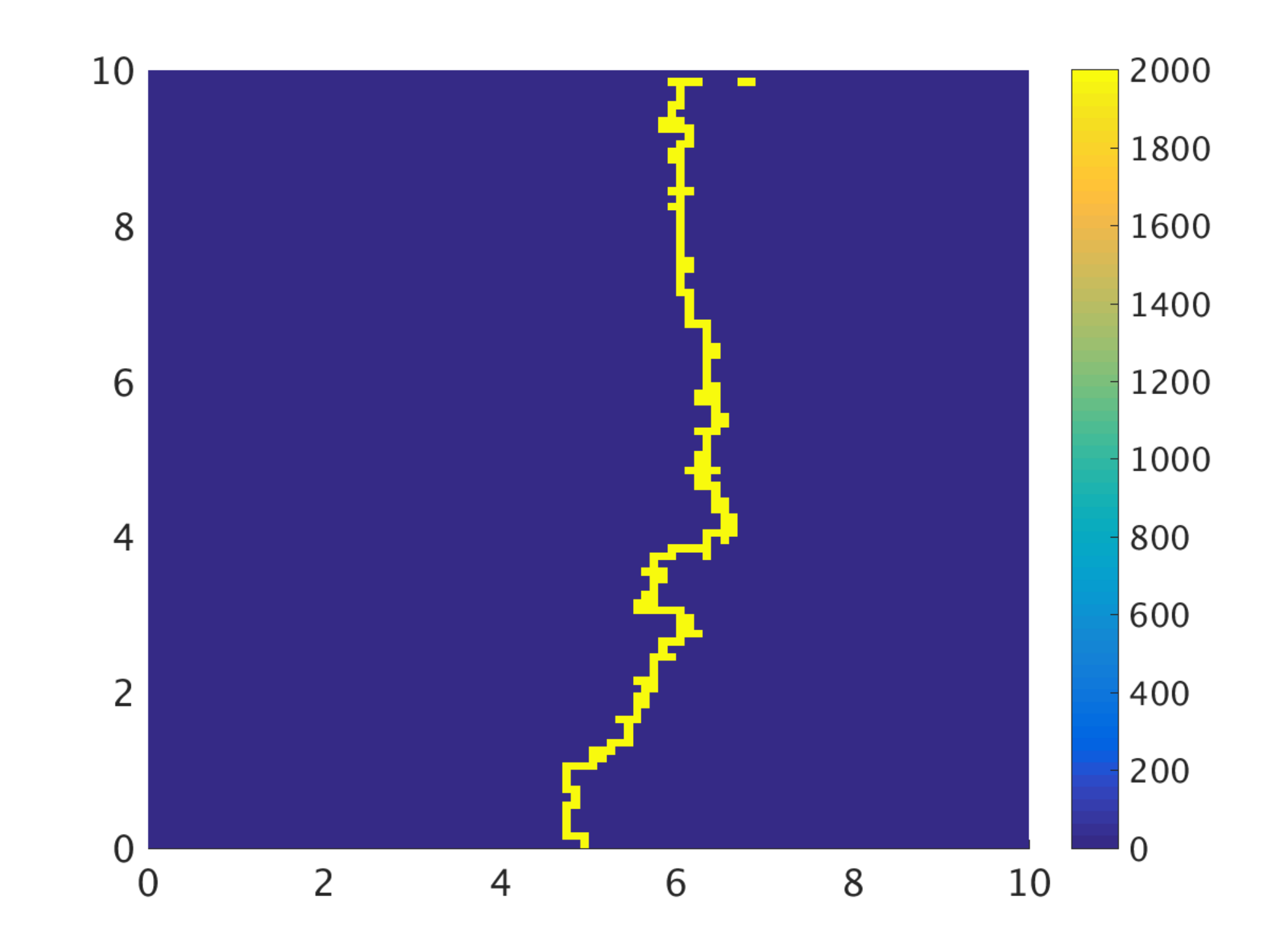}
\end{minipage}
\begin{minipage}[b]{0.45\linewidth}
b) \\
\includegraphics[width=0.99\columnwidth]{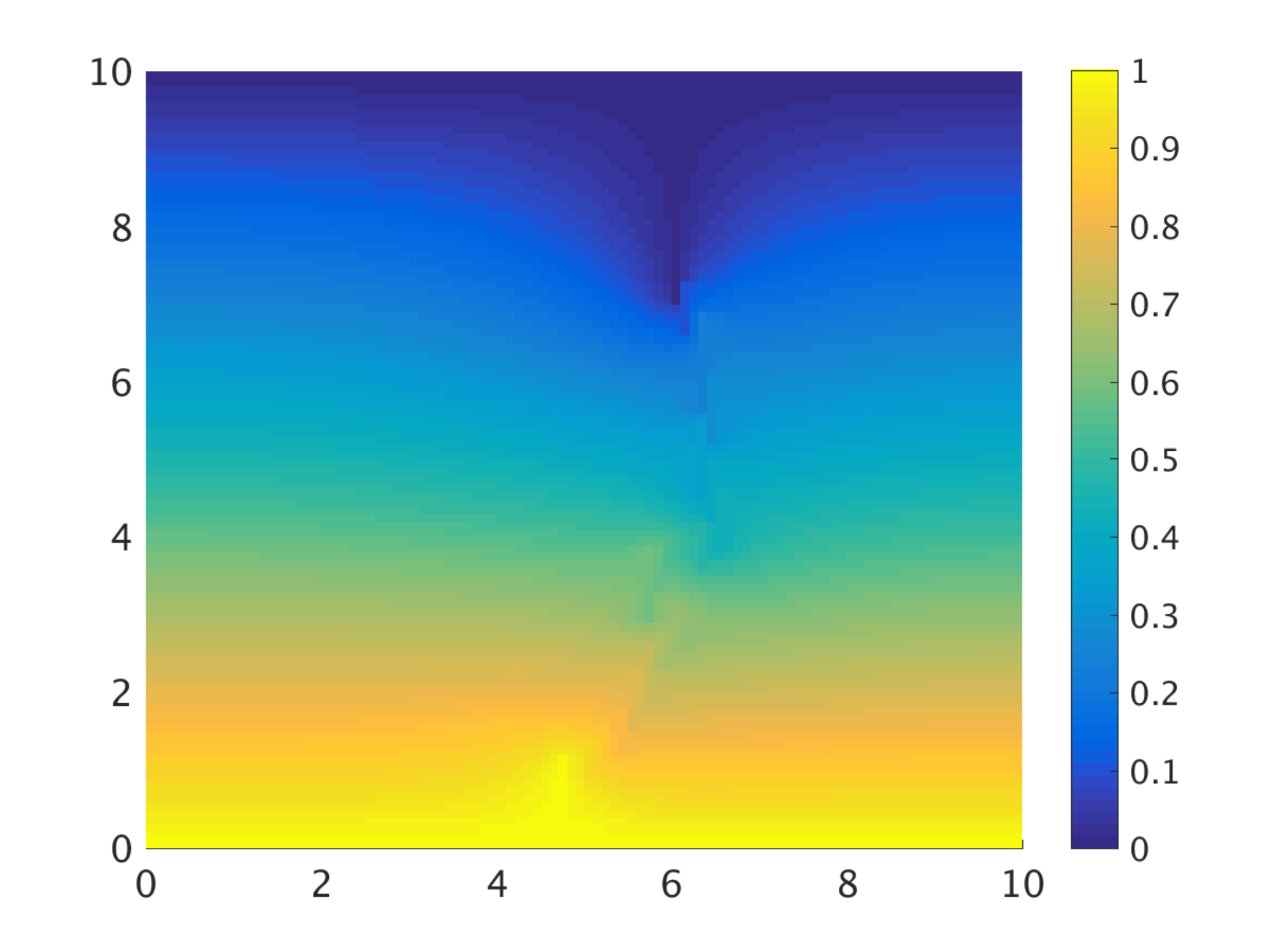}
\end{minipage}
\begin{minipage}[b]{0.45\linewidth}
c) \\
\includegraphics[width=0.99\columnwidth]{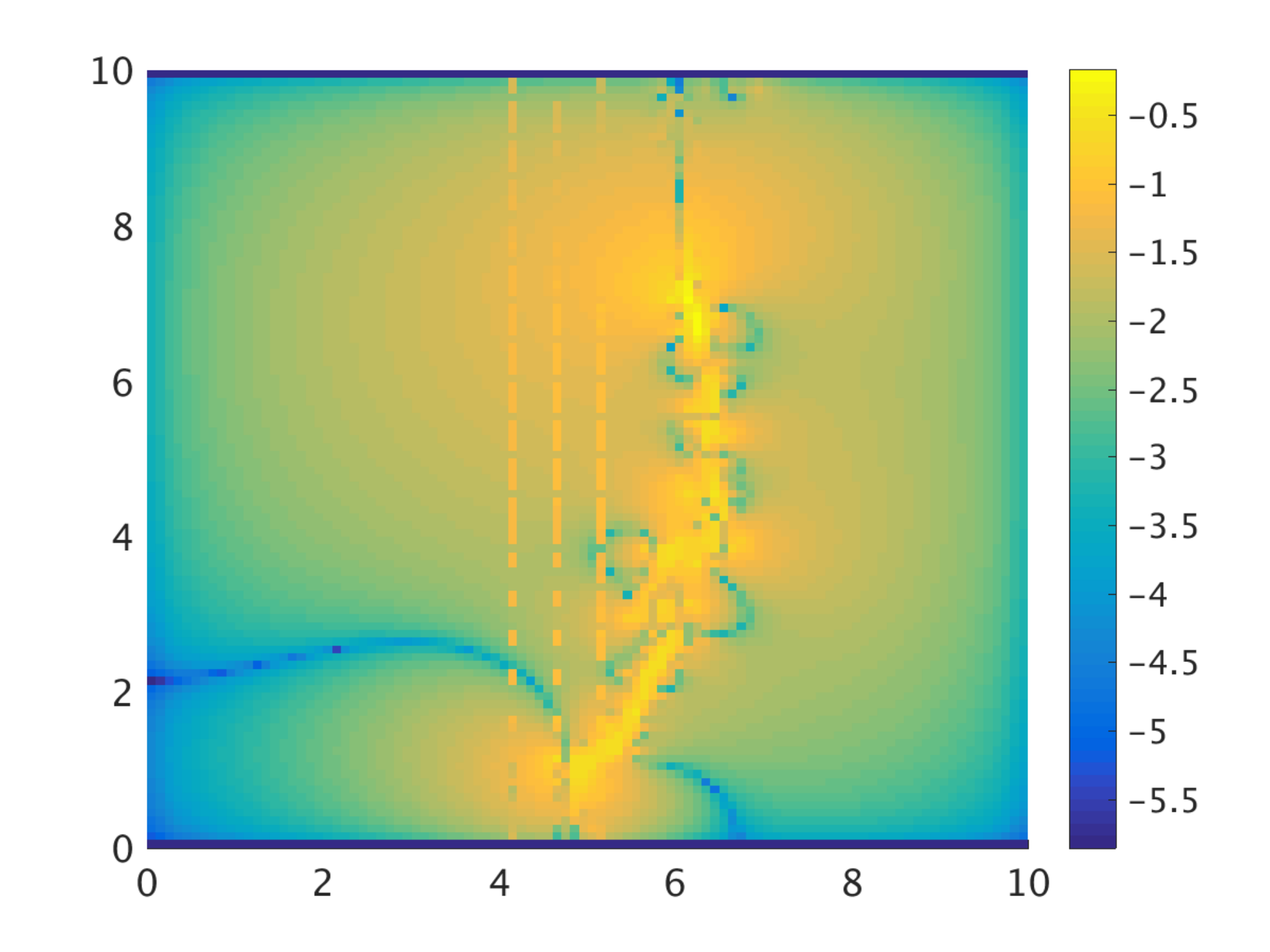}
\end{minipage}
\begin{minipage}[b]{0.45\linewidth}
d) \\
\includegraphics[width=0.99\columnwidth]{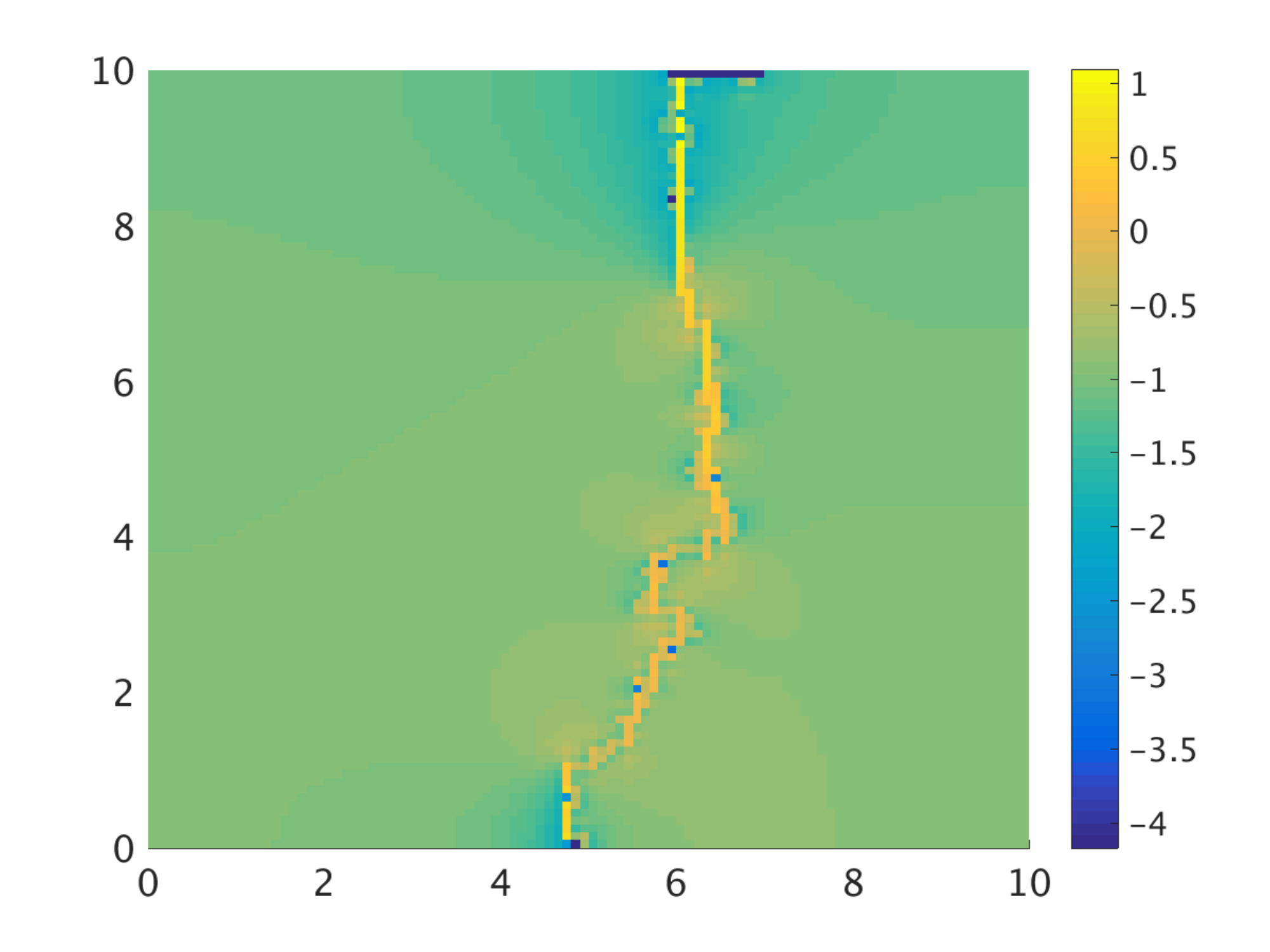}
\end{minipage}
\begin{minipage}[b]{0.45\linewidth}
e) \\
\includegraphics[width=0.99\columnwidth]{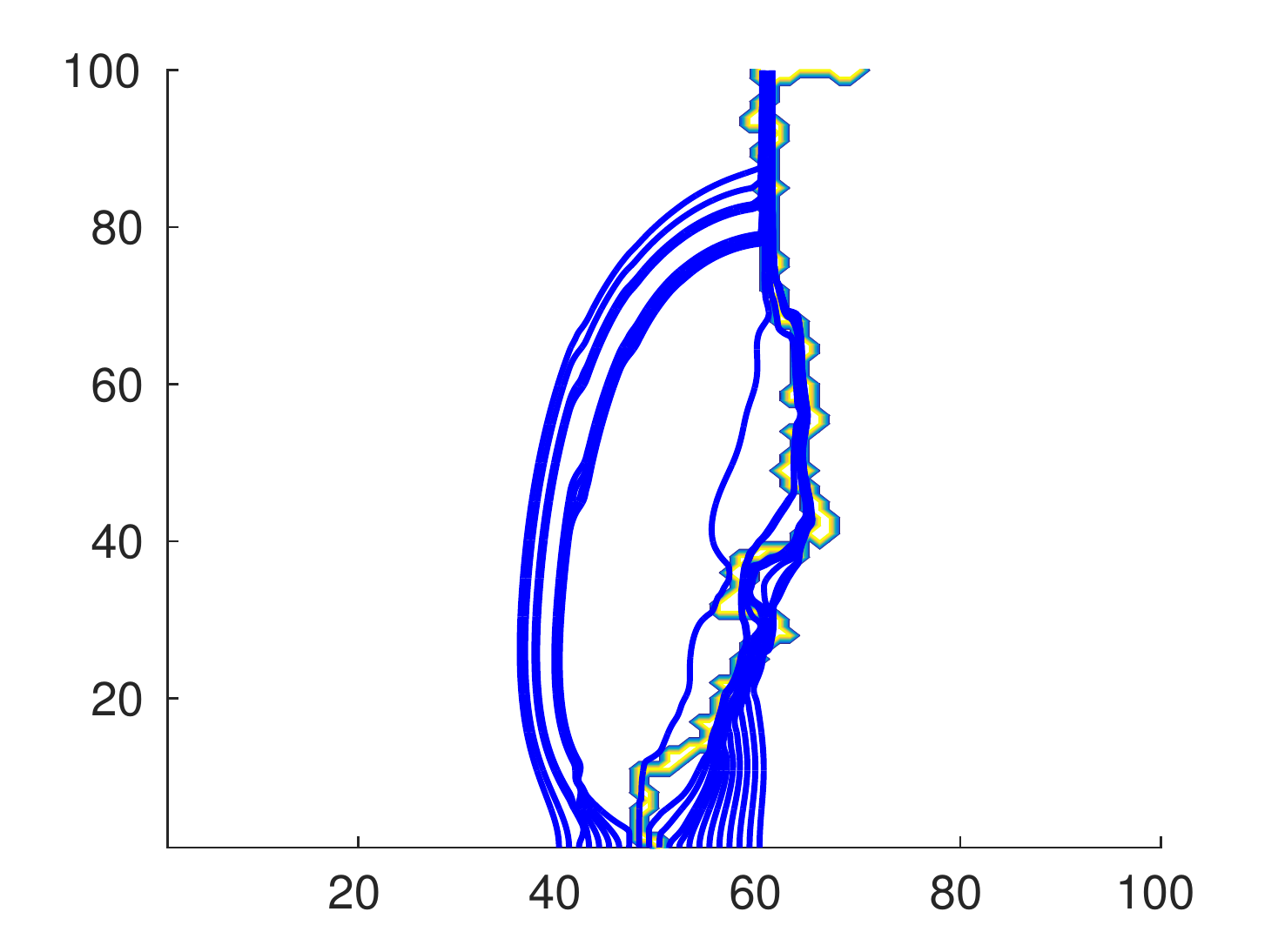}
\end{minipage}
\begin{minipage}[b]{0.45\linewidth}
f) \\
\includegraphics[width=0.99\columnwidth]{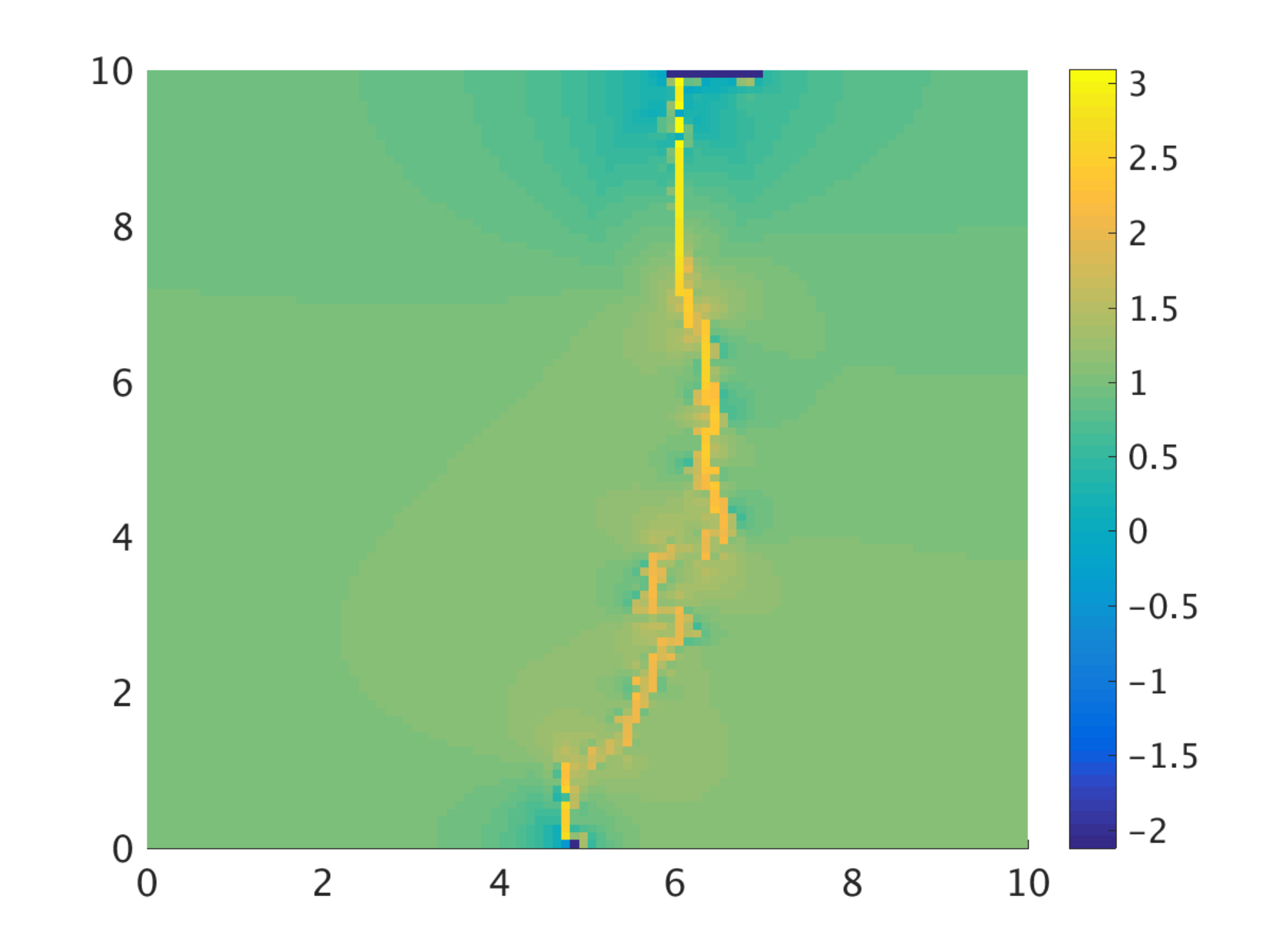}
\end{minipage}
\caption[Permeability, pressure and velocity fields for the first fracture example. ]{Permeability, pressure and velocity fields for the fracture example with varying velocity, \secref{fracsec}. a) $k_y$, the permeability in the y-direction. b) The calculated pressure field. c) The calculated velocity field in the x-direction, logarithmic scale. d) The calculated velocity field in the y-direction, logarithmic scale. e) Some streamlines of the calculated velocity field; flow is from bottom to top. f) P\'eclet number, logarithmic scale. }
\label{frac-flow-setup}
\end{figure}


\begin{figure}[h]
\centering
\begin{minipage}[b]{0.45\linewidth}
a)\\
\includegraphics[width=0.99\columnwidth]{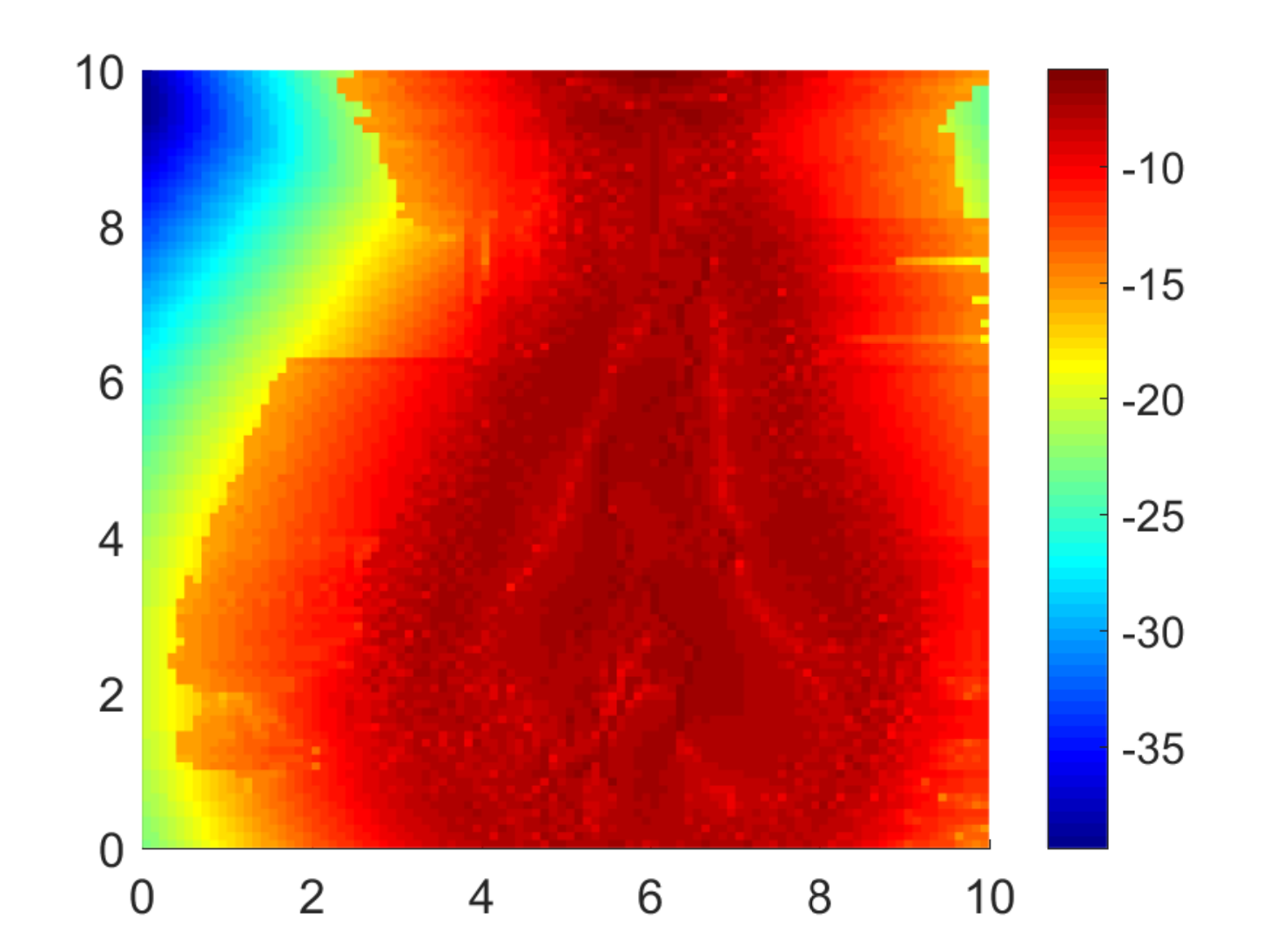}
\end{minipage}
\begin{minipage}[b]{0.45\linewidth}
b) \\
\includegraphics[width=0.99\columnwidth]{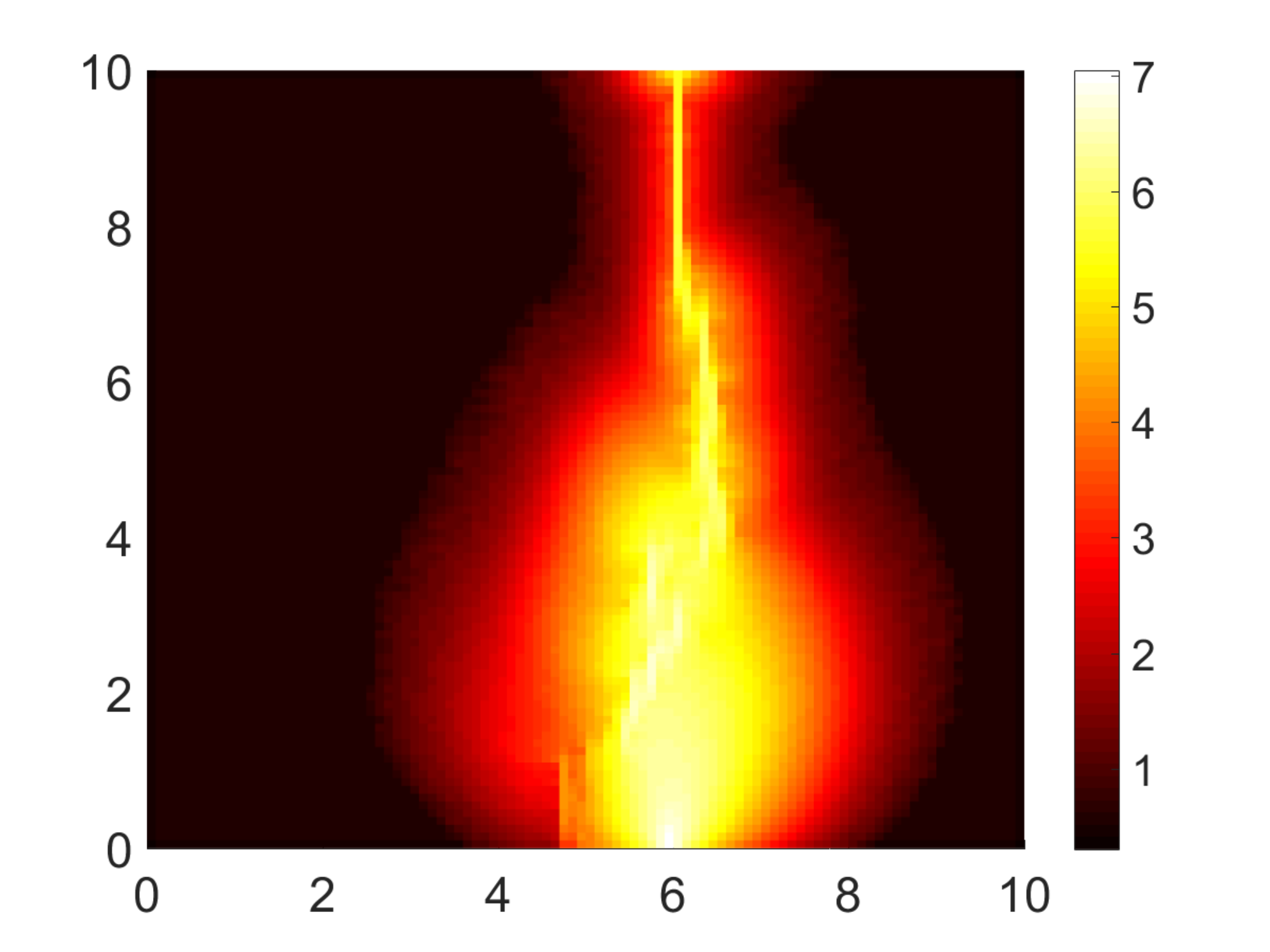}
\end{minipage} 
\caption[Final state of the first fracture example.]{Final state of the fracture example, approximated with each of the new schemes, with $\Delta M =10^{-8}$. a) Error compared to comparison solve (logarithmic). b)Events per cell, logarithmic scale.}
\label{frac-flow-show}
\end{figure}

\begin{figure}[h]
\centering
\begin{minipage}[b]{0.45\linewidth}
a) \\
\includegraphics[width=0.99\columnwidth]{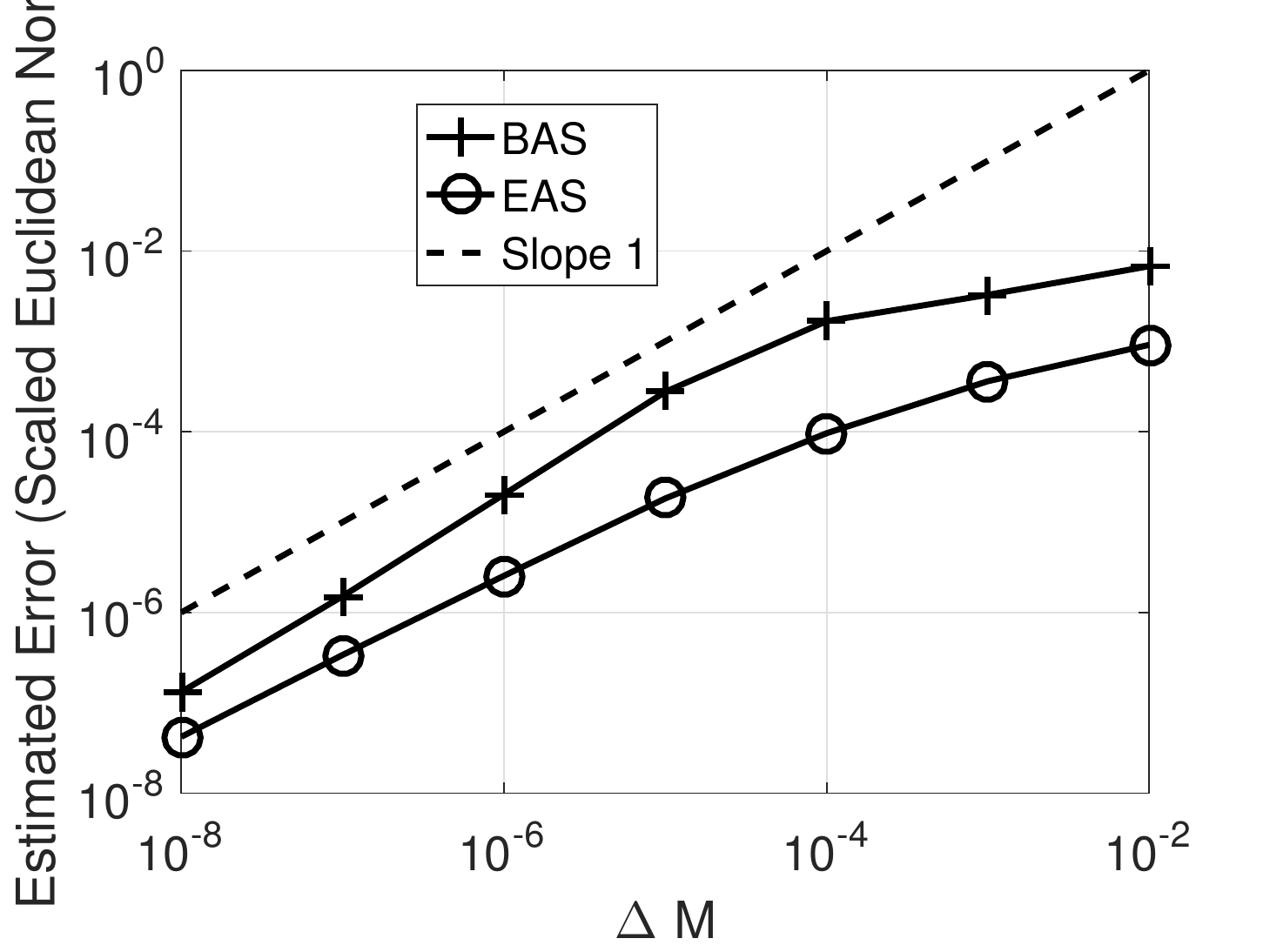}
\end{minipage}
\begin{minipage}[b]{0.45\linewidth}
b) \\
\includegraphics[width=0.99\columnwidth]{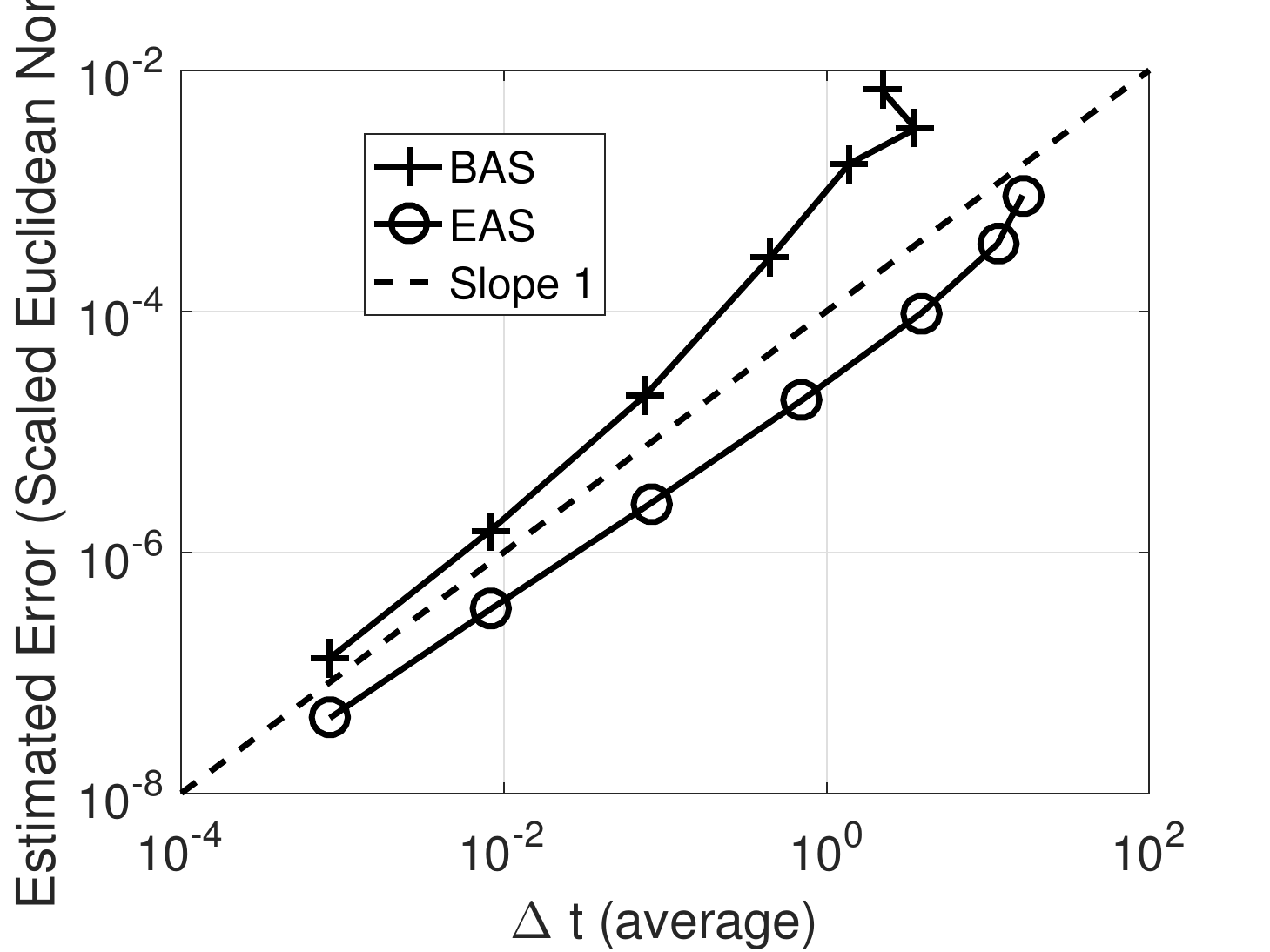}
\end{minipage} 
\begin{minipage}[b]{0.45\linewidth}
c) \\
\includegraphics[width=0.99\columnwidth]{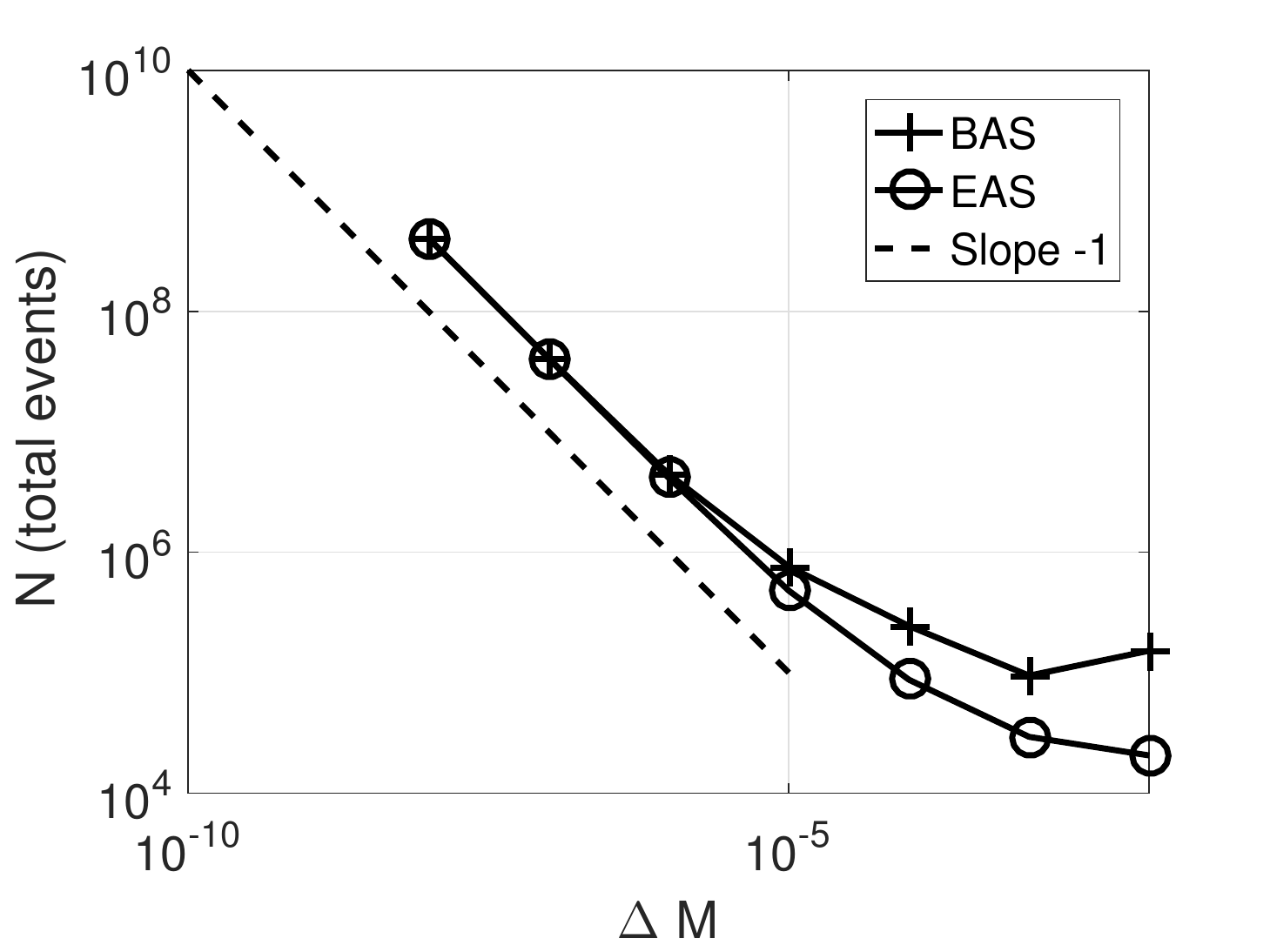}
\end{minipage}
\begin{minipage}[b]{0.45\linewidth}
d) \\
\includegraphics[width=0.99\columnwidth]{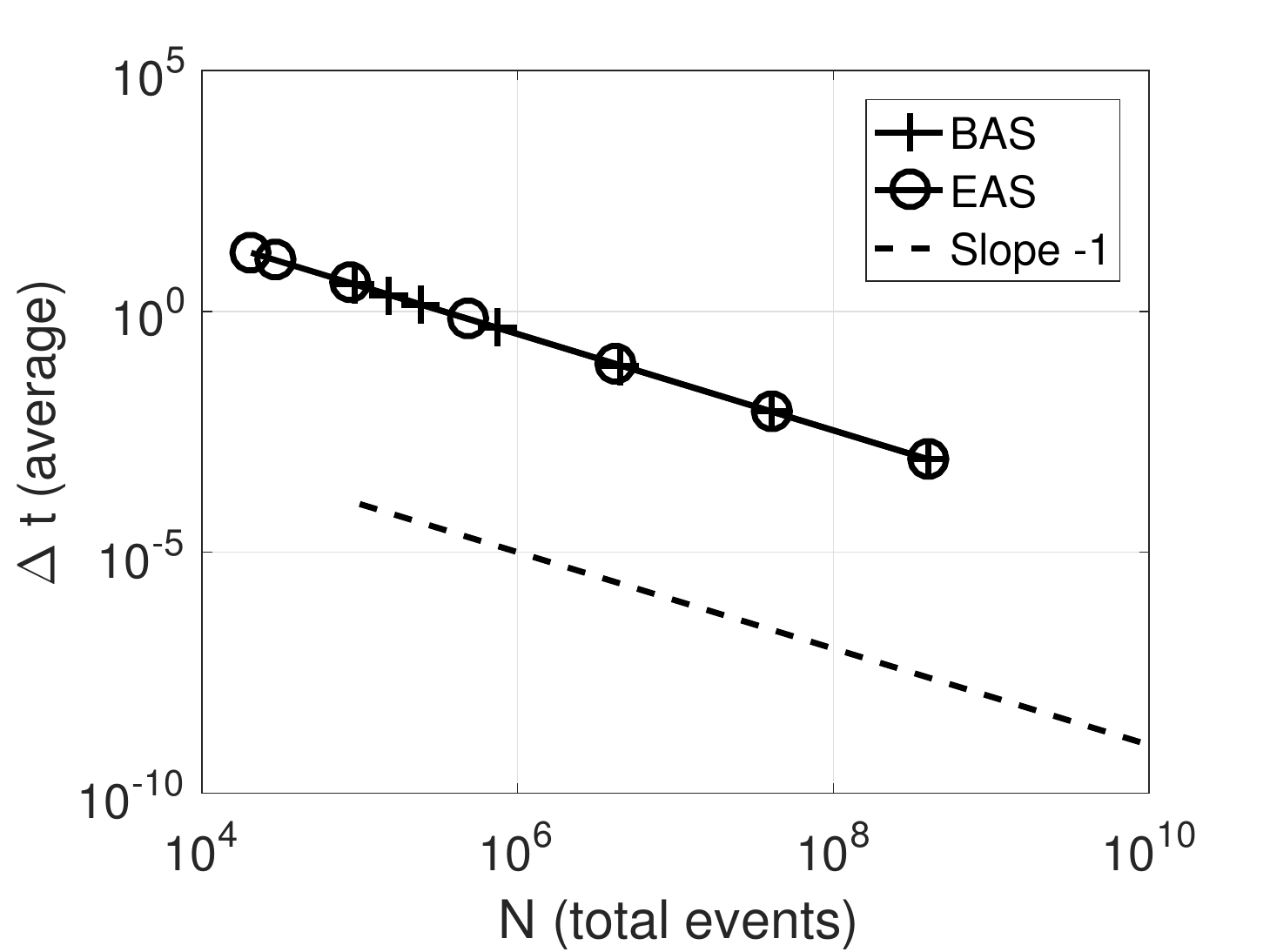}
\end{minipage} 
\caption{Results for the experiment described in \secref{fracsec}. }
\label{frac-flow-plotsMain}
\end{figure}

\clearpage

\subsection{Random diffusivity field}
\label{sf_sec}
This test is in two dimensions, with a random diffusivity field. The PDE is again \eqref{ad_dif_full_c3}. The domain is again $\Omega = 10 \times 10 \times 10$ metres and discretised into $100 \times100 \times 1$ cells. The diffusivity field is as follows. We prepared a Normal, mean-zero, random field $\psi(\mathbf{x})$ over the cells with correlation function
$$
C(X,Y) = e^{\frac{||X-Y||}{l}},
$$
with the correlation length between the x and y directions being $l=9$. The diffusivity field used was then $D(\mathbf{x})= 10.0^{\psi(\mathbf{x})}$. The field was generated using the standard Cholesky technique (see for example \cite{lord2014introduction}); there was no need for approximation due to the relatively small number of cells.  The velocity field was uniformly zero.\\
For this test the concentration was $c(\mathbf{x})=0$ for all $\mathbf{x}$ except at $\mathbf{x} = (4.95, 5.05)^T$ where $c(\mathbf{x})=0$. The boundary conditions were no-flow on all boundaries. \\
In \figref{sf_show} is displayed the comparison solve (produced by the exponential integrator), and solves with EAS with different values of $\Delta M$, showing how decreasing $\Delta M$ increases the agreement of the solve with EAS to the comparison solve. Also in \figref{sf_show} d) is a plot showing the number of events across the system, showing where activity was concentrated. \\
In \figref{sf_plotsMain} we show convergence results and parameter relations for this system. Broadly, most of the conclusions are the same as from the previous experiment; see the discussion in \secref{frac_sec}. \\

\begin{figure}[h]
\centering
\begin{minipage}[b]{0.45\linewidth}
a) \\
\includegraphics[width=0.99\columnwidth]{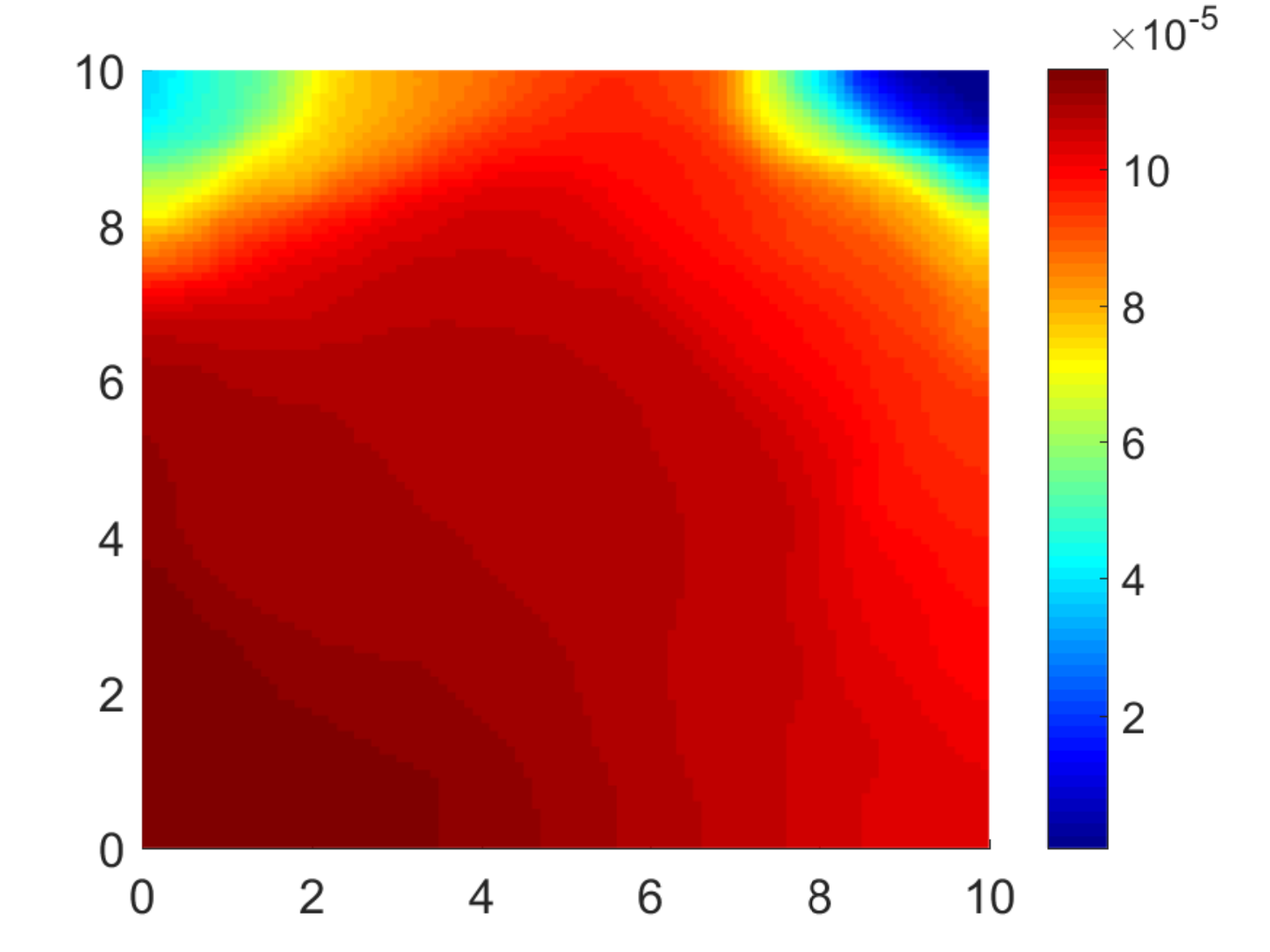}
\end{minipage}
\begin{minipage}[b]{0.45\linewidth}
b) \\
\includegraphics[width=0.99\columnwidth]{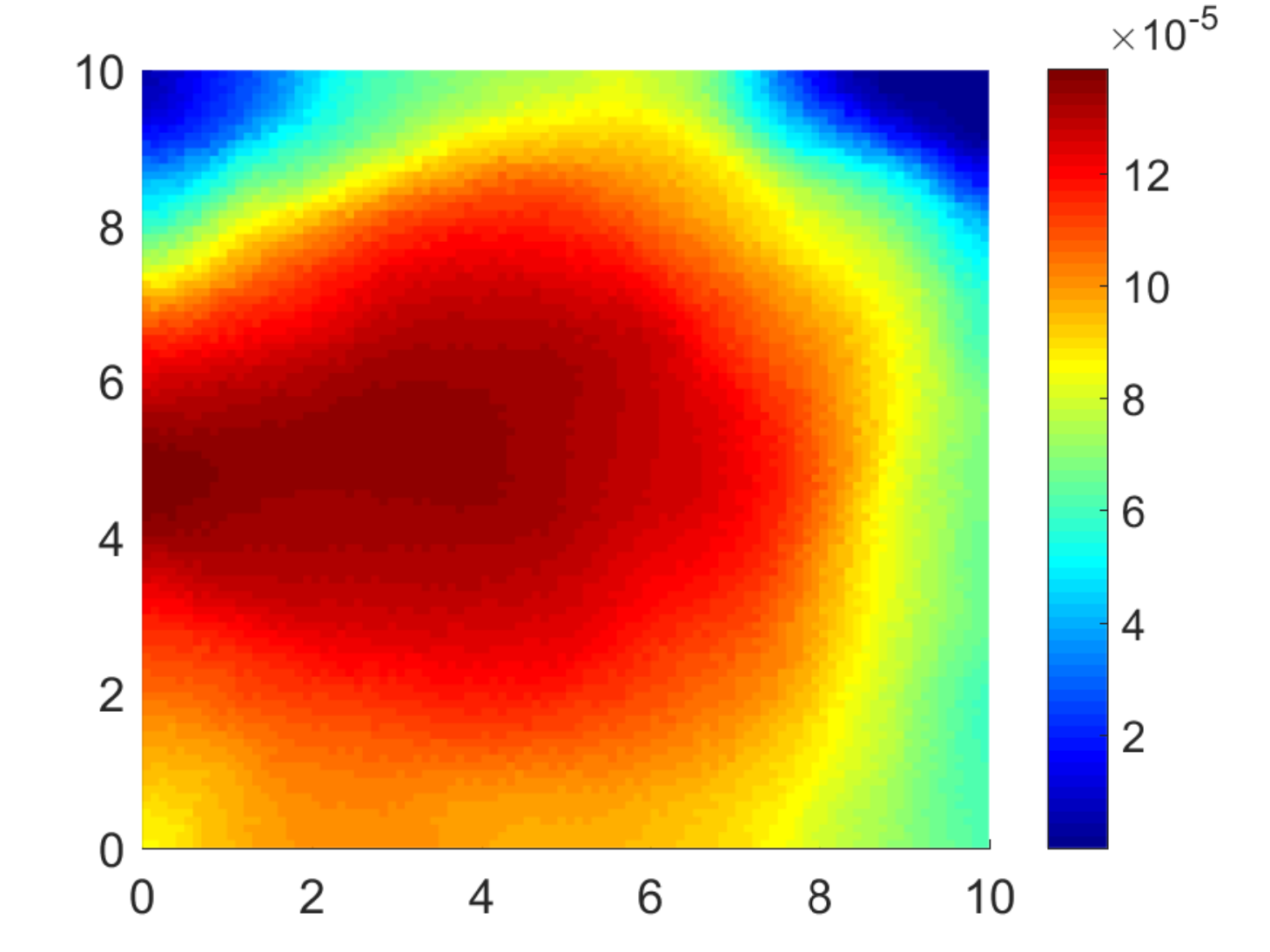}
\end{minipage} 
\begin{minipage}[b]{0.45\linewidth}
c) \\
\includegraphics[width=0.99\columnwidth]{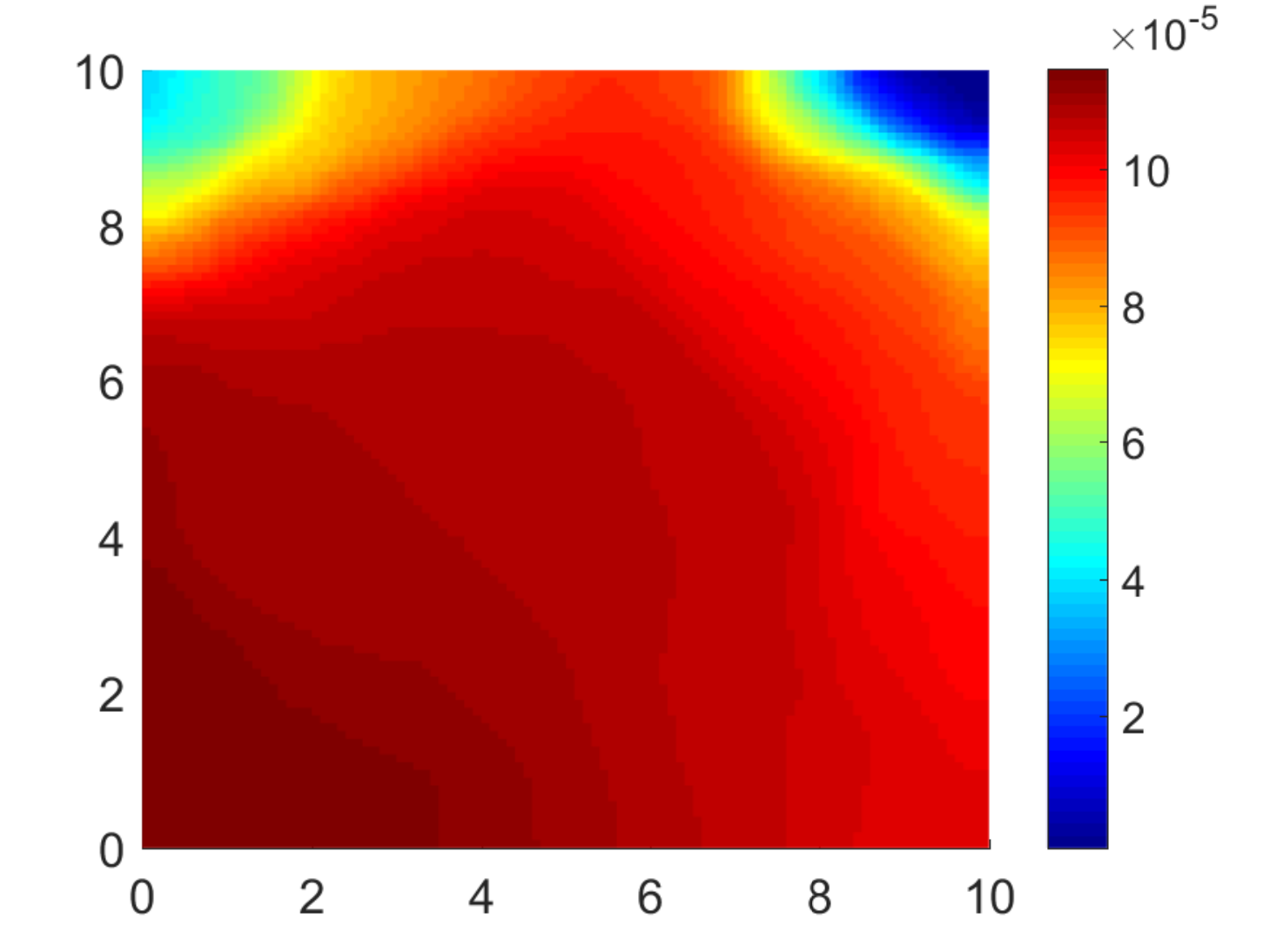}
\end{minipage}
\begin{minipage}[b]{0.45\linewidth}
d) \\
\includegraphics[width=0.99\columnwidth]{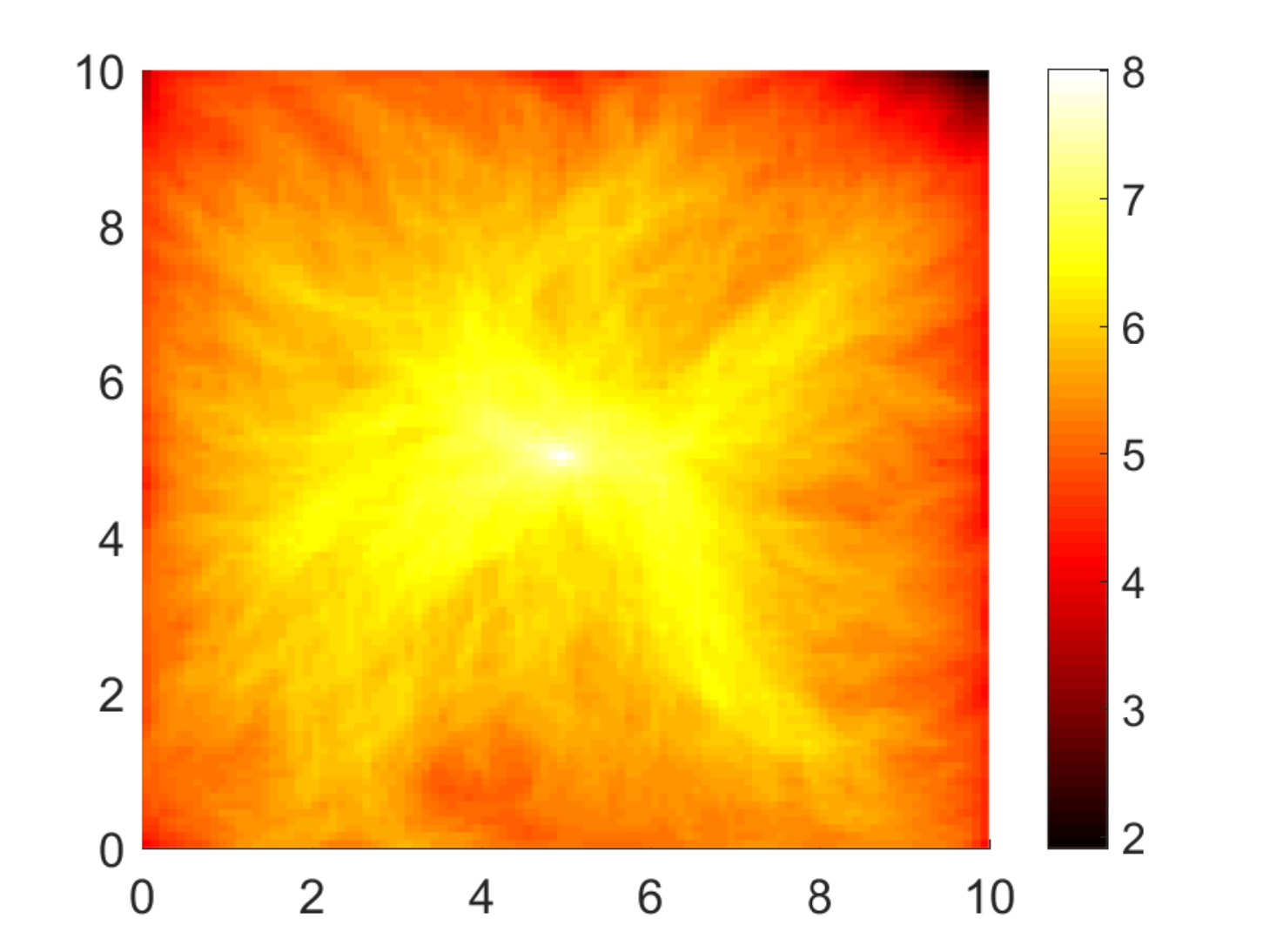}
\end{minipage} 
\label{sf_show}
\caption{For the system described in \secref{sf_sec}. a) The comparison solve produced with an exponential integrator. b) Solution produced by EAS with $\Delta M = 10^{-6}$; here $\Delta M$ is too great to allow sufficient accuracy (although positivity is preserved). c) Solution produced by EAS with $\Delta M = 10^{-9}$; this solution is in strong agreement with the comparison solve. d) Shows logarithm of number of events experienced by each cell for the run with EAS and $\Delta M = 10^{-9}$.}
\end{figure}

\begin{figure}[h]
\centering
\begin{minipage}[b]{0.45\linewidth}
a) \\
\includegraphics[width=0.99\columnwidth]{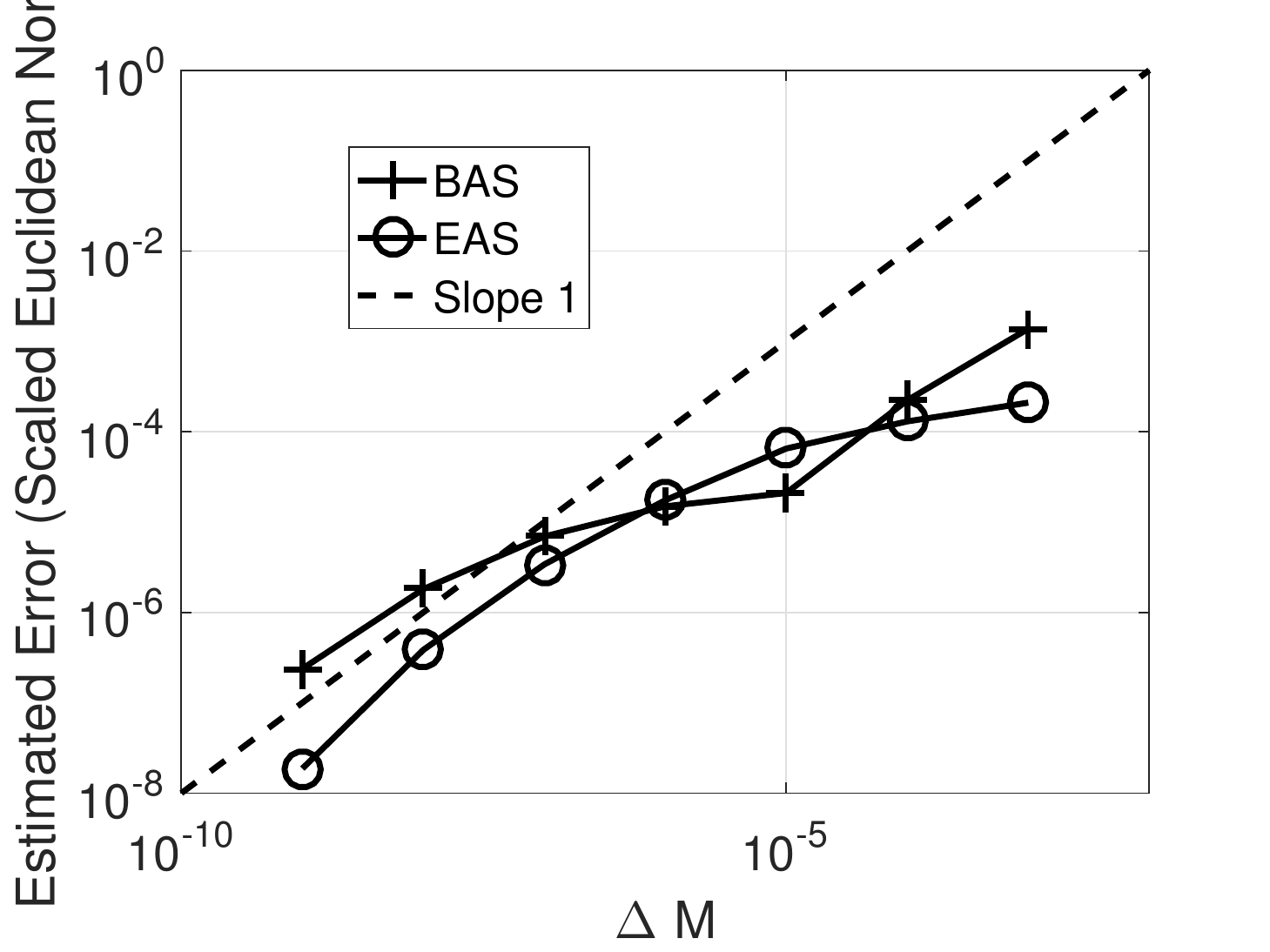}
\end{minipage}
\begin{minipage}[b]{0.45\linewidth}
b) \\
\includegraphics[width=0.99\columnwidth]{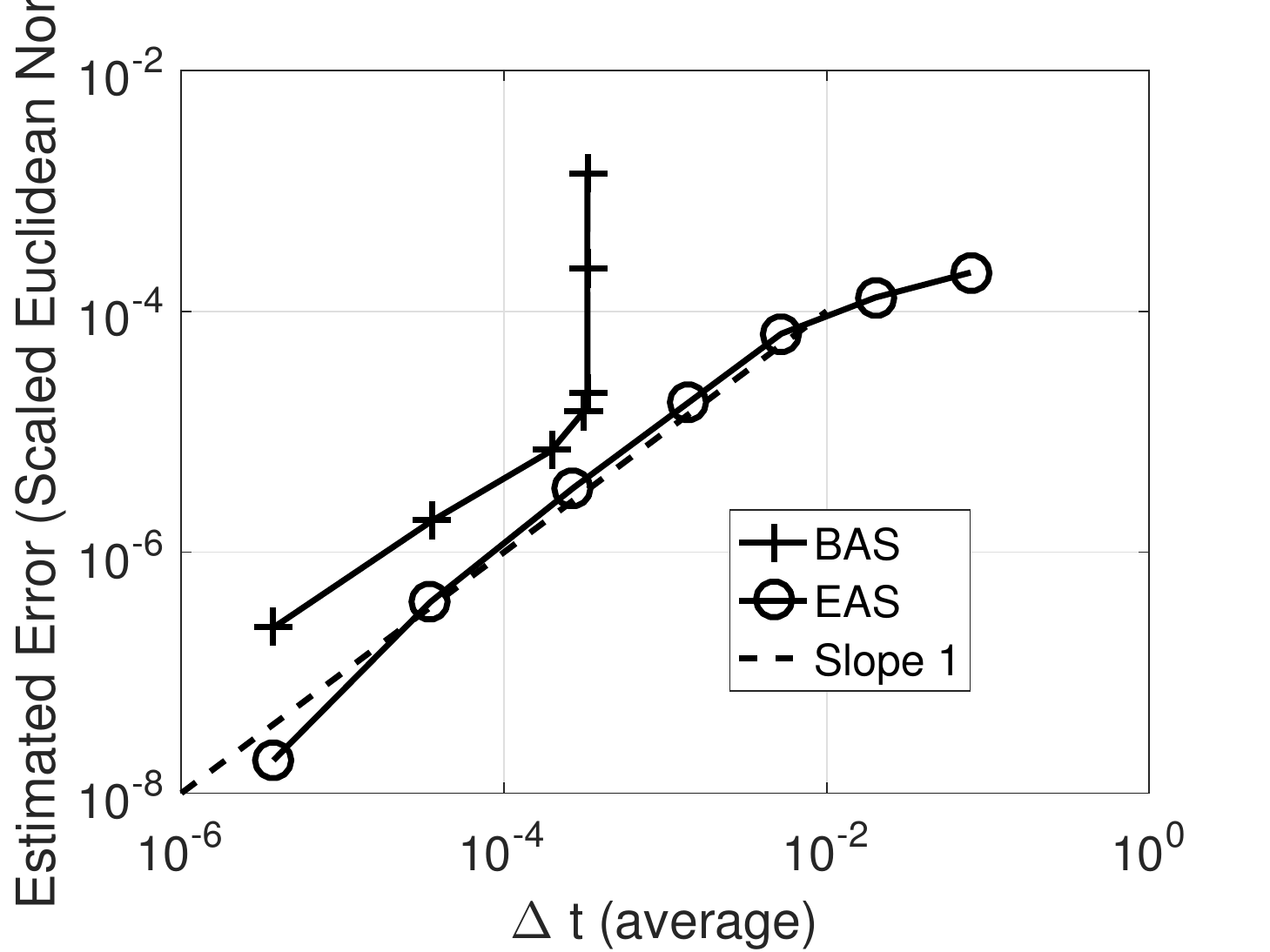}
\end{minipage} 
\begin{minipage}[b]{0.45\linewidth}
c) \\
\includegraphics[width=0.99\columnwidth]{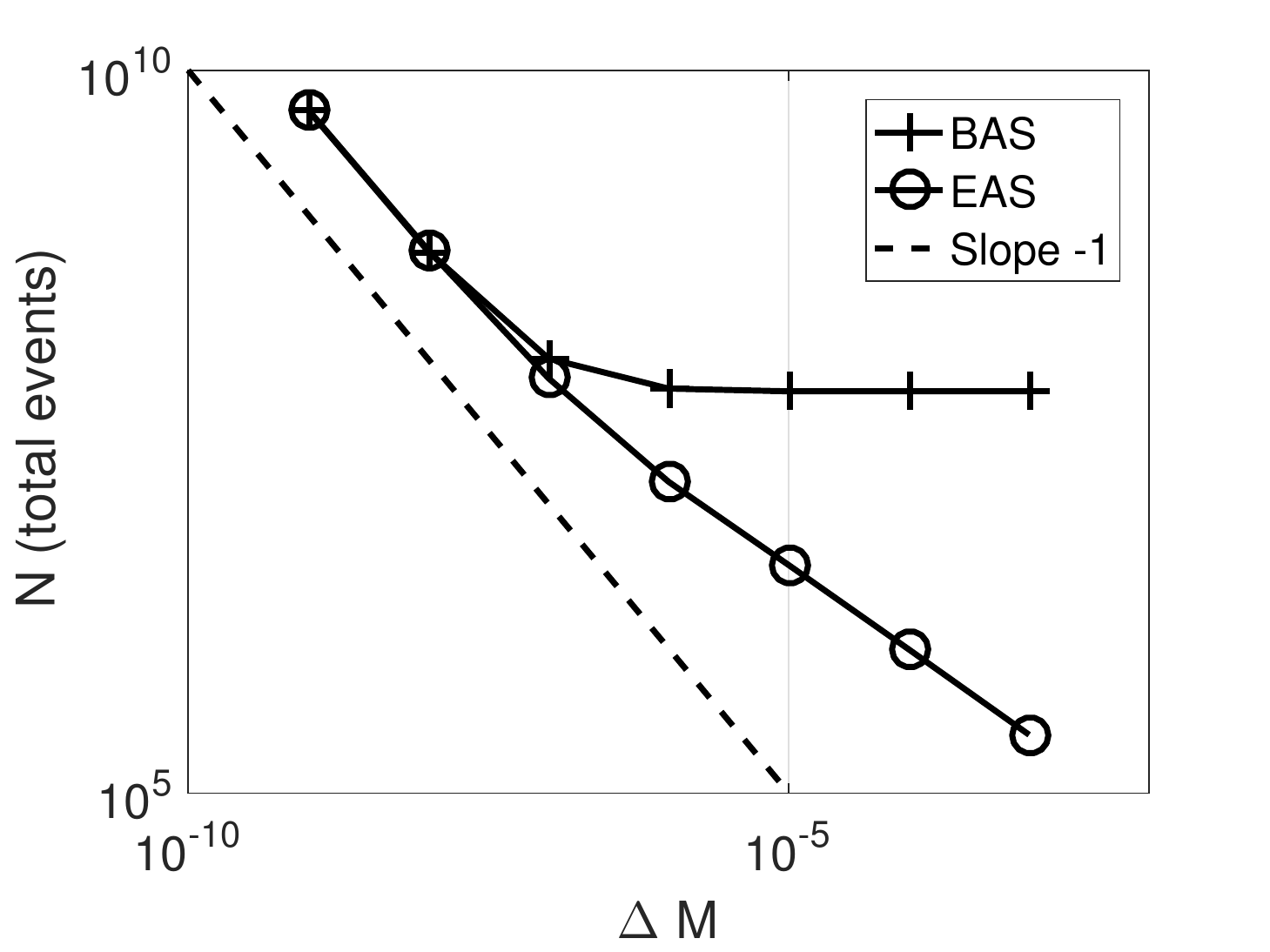}
\end{minipage}
\begin{minipage}[b]{0.45\linewidth}
d) \\
\includegraphics[width=0.99\columnwidth]{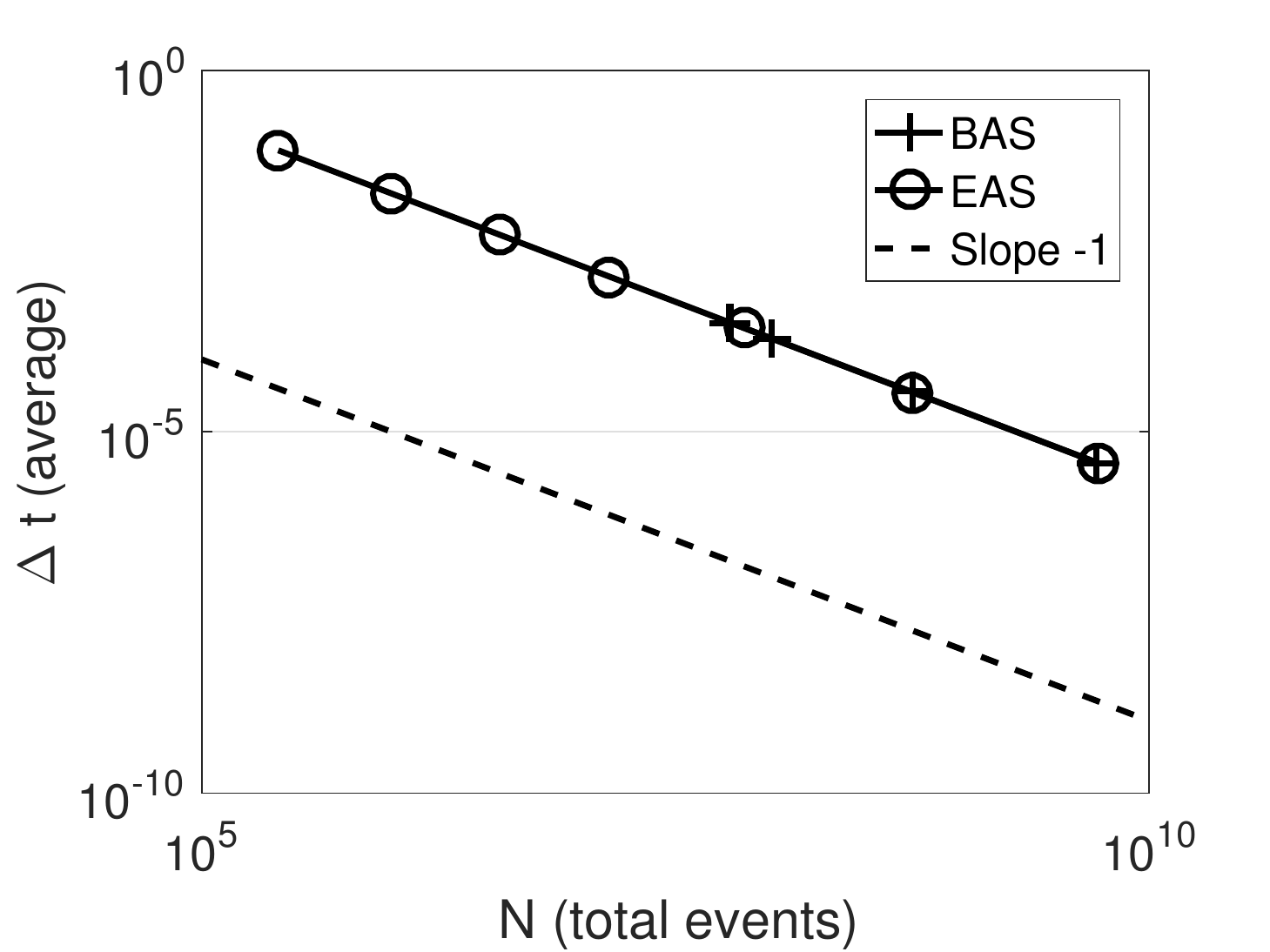}
\end{minipage} 
\caption{Results for the experiment described in \secref{sf_sec}. }
\label{sf_plotsMain}
\end{figure}


\clearpage

\subsection{Fracture System with Langmuir-type Reaction Term}
\label{frac_sec}
The grid is the same as in \secref{fracsec}, and the fracture is defined by the same line of cells. We set the diffusivity to be $D = 100$ on the fracture and $D=0.1$ elsewhere. We specify a simple velocity field. The velocity field was set to be uniformly one in the x-direction and zero in the other directions in the domain, i.e., $\mathbf{v}(\mathbf{x}) = (1,0,0)^T$ , to the right in \figref{fraclang_show}. The initial condition was $c(\mathbf{x})=0$ everywhere except at $\mathbf{x} = (4.95, 9.95)^T$ where $c(\mathbf{x})=1$. \\
We add a spatially dependent Langmuir type reaction term, 
$$
r(c, \mathbf{x}) = - \frac{0.02}{D(\mathbf{x})^2} \frac{c}{1+ c},
$$
In this way the reaction occurs much slower in the fracture than the rest of the domain. Physically this represents the solute species being much less likely to adsorb to the walls of the porous medium, and be lost, within the fracture. The final time is $T=2.4$. \\
In Figures \ref{fraclang_show}, and \ref{fralang_plotsMain} we show the final state of the system, the convergence results, and the parameter relations for the schemes. The layout is the same as in \secref{fracsec} and \secref{sf_sec}. The conclusions are largely the same, even though the system differers from the previous ones due to the addition of a reaction term. 
\begin{figure}[h]
\centering
\begin{minipage}[b]{0.45\linewidth}
a) \\
\includegraphics[width=0.99\columnwidth]{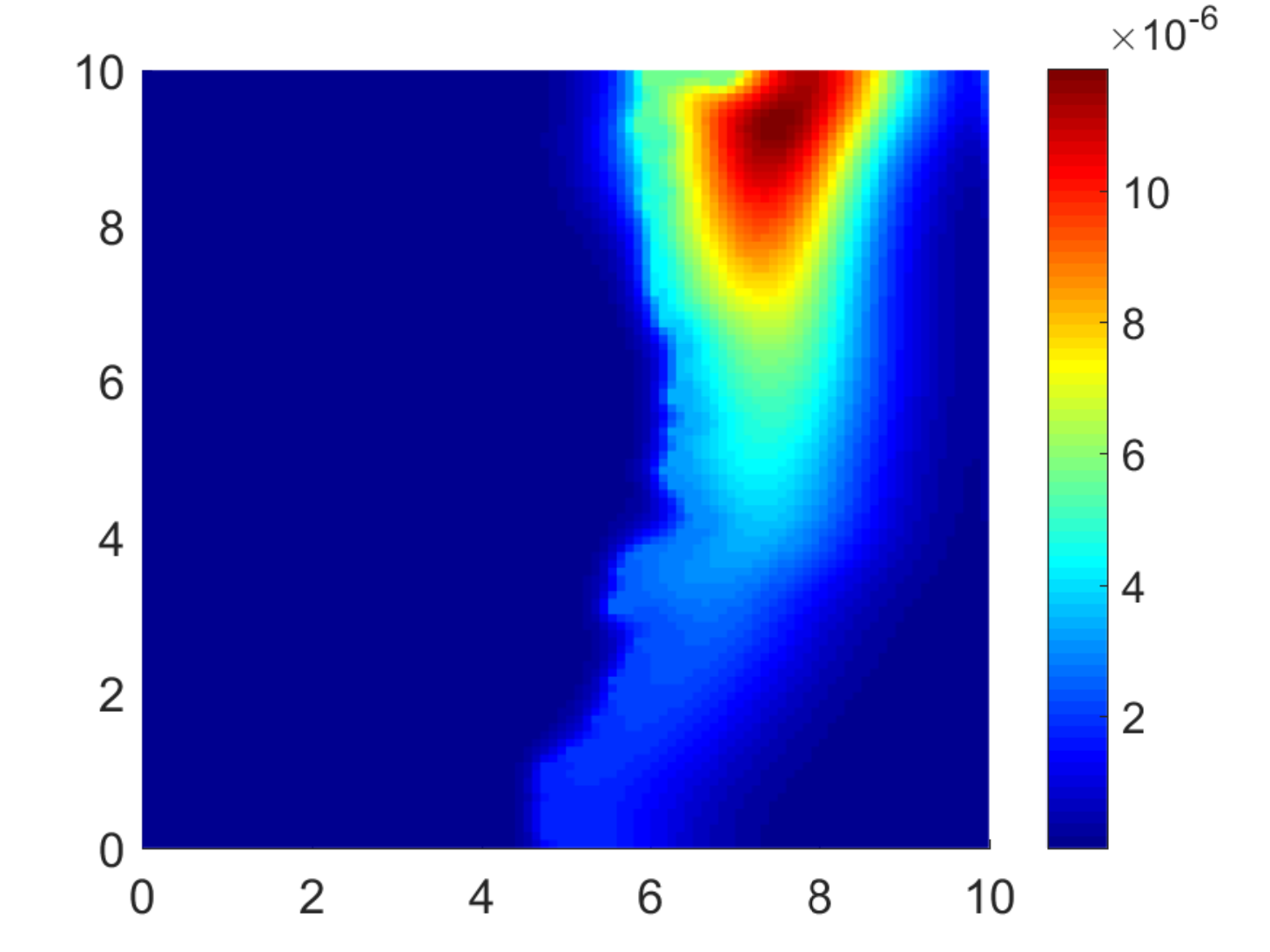}
\end{minipage}
\begin{minipage}[b]{0.45\linewidth}
b) \\
\includegraphics[width=0.99\columnwidth]{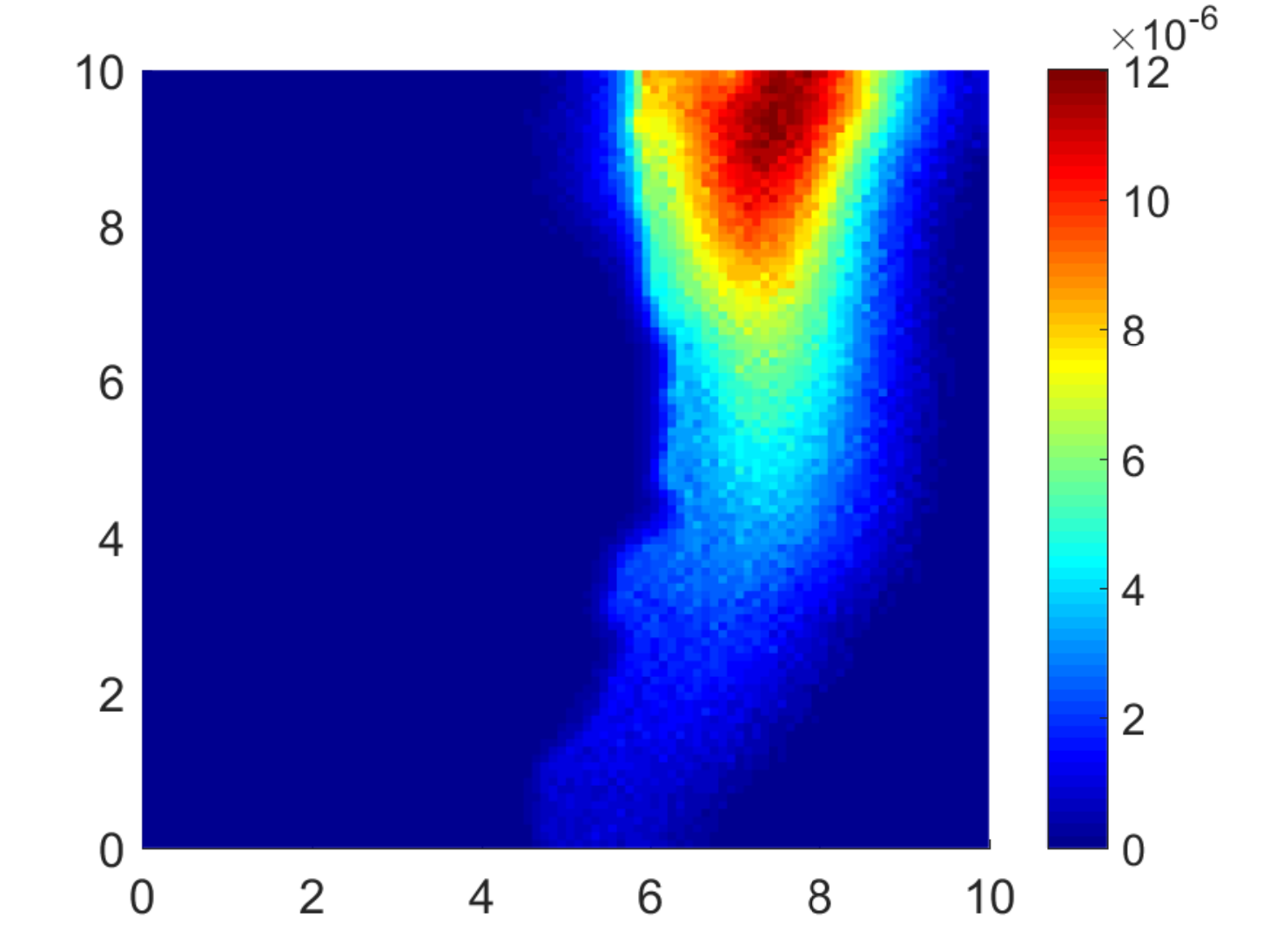}
\end{minipage} 
\begin{minipage}[b]{0.45\linewidth}
c) \\
\includegraphics[width=0.99\columnwidth]{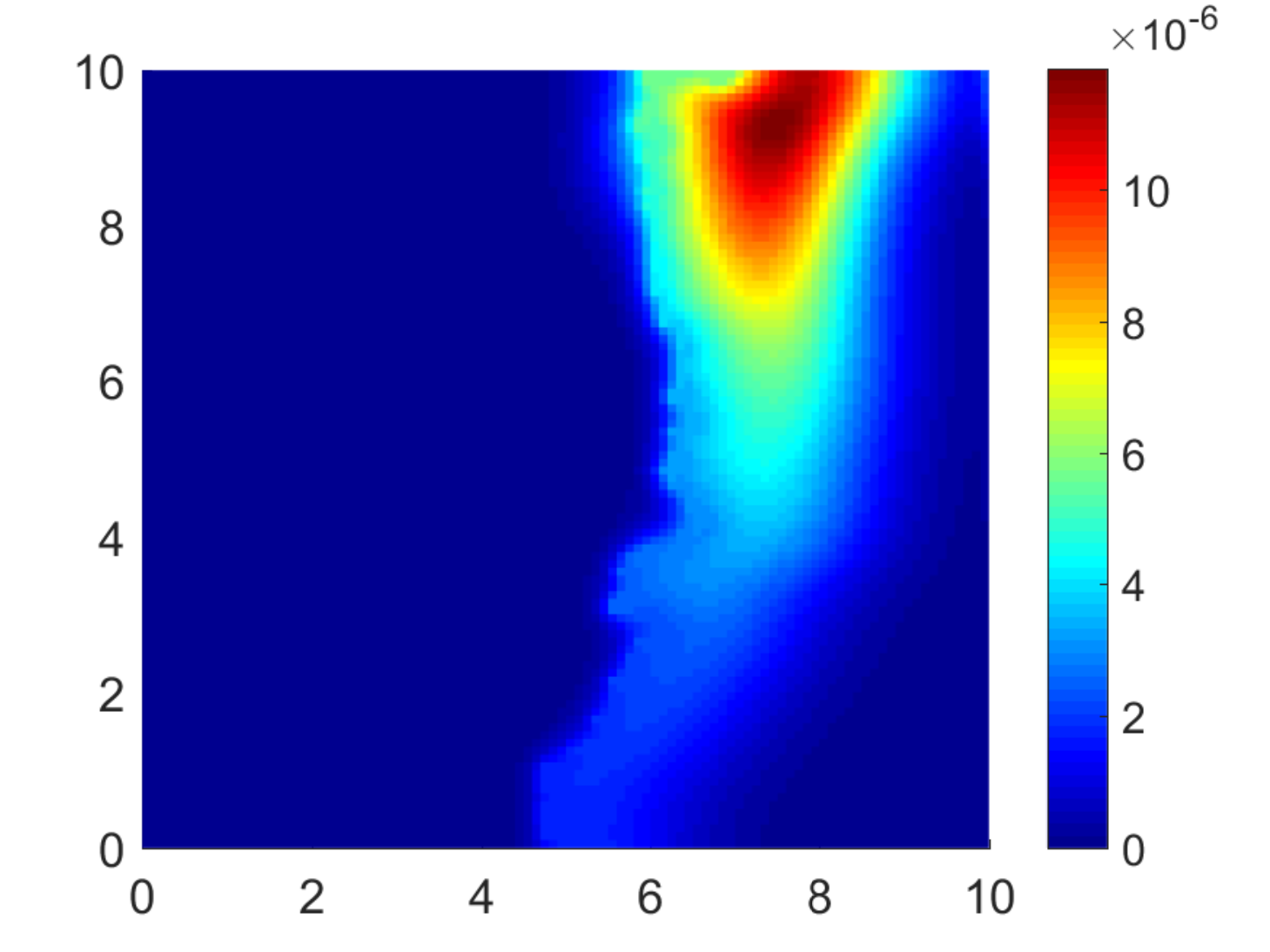}
\end{minipage}
\begin{minipage}[b]{0.45\linewidth}
d) \\
\includegraphics[width=0.99\columnwidth]{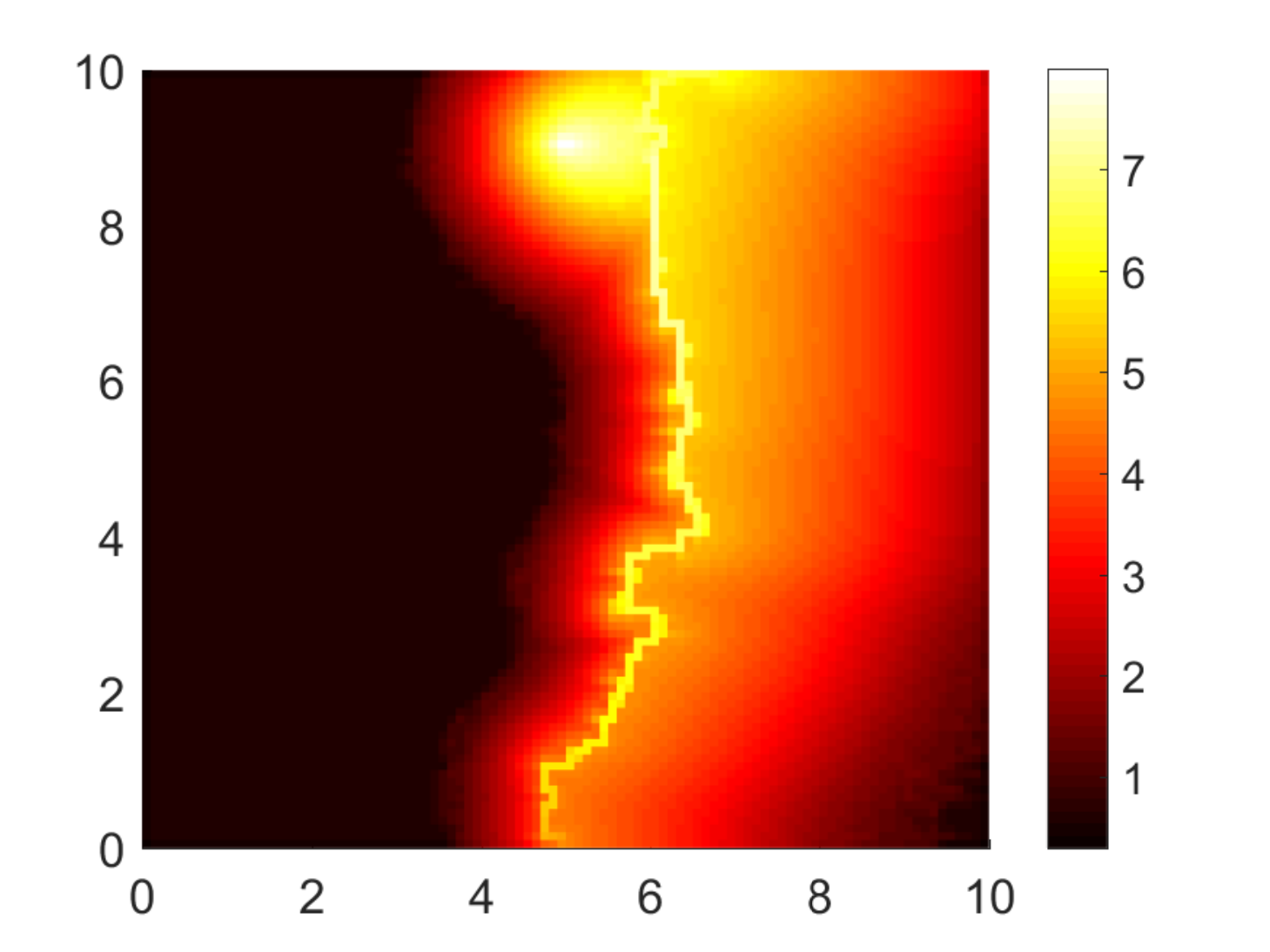}
\end{minipage} 
\label{fraclang_show}
\caption{For the system described in \secref{frac_sec}. a) The comparison solve produced with an exponential integrator. b) Solution produced by EAS with $\Delta M = 10^{-7}$; here $\Delta M$ is too great to allow sufficient accuracy (although positivity is preserved). c) Solution produced by EAS with $\Delta M = 10^{-9}$; this solution is in strong agreement with the comparison solve. d) Shows logarithm of number of events experienced by each cell for the run with EAS and $\Delta M = 10^{-9}$.}
\end{figure}

\begin{figure}[h]
\centering
\begin{minipage}[b]{0.45\linewidth}
a) \\
\includegraphics[width=0.99\columnwidth]{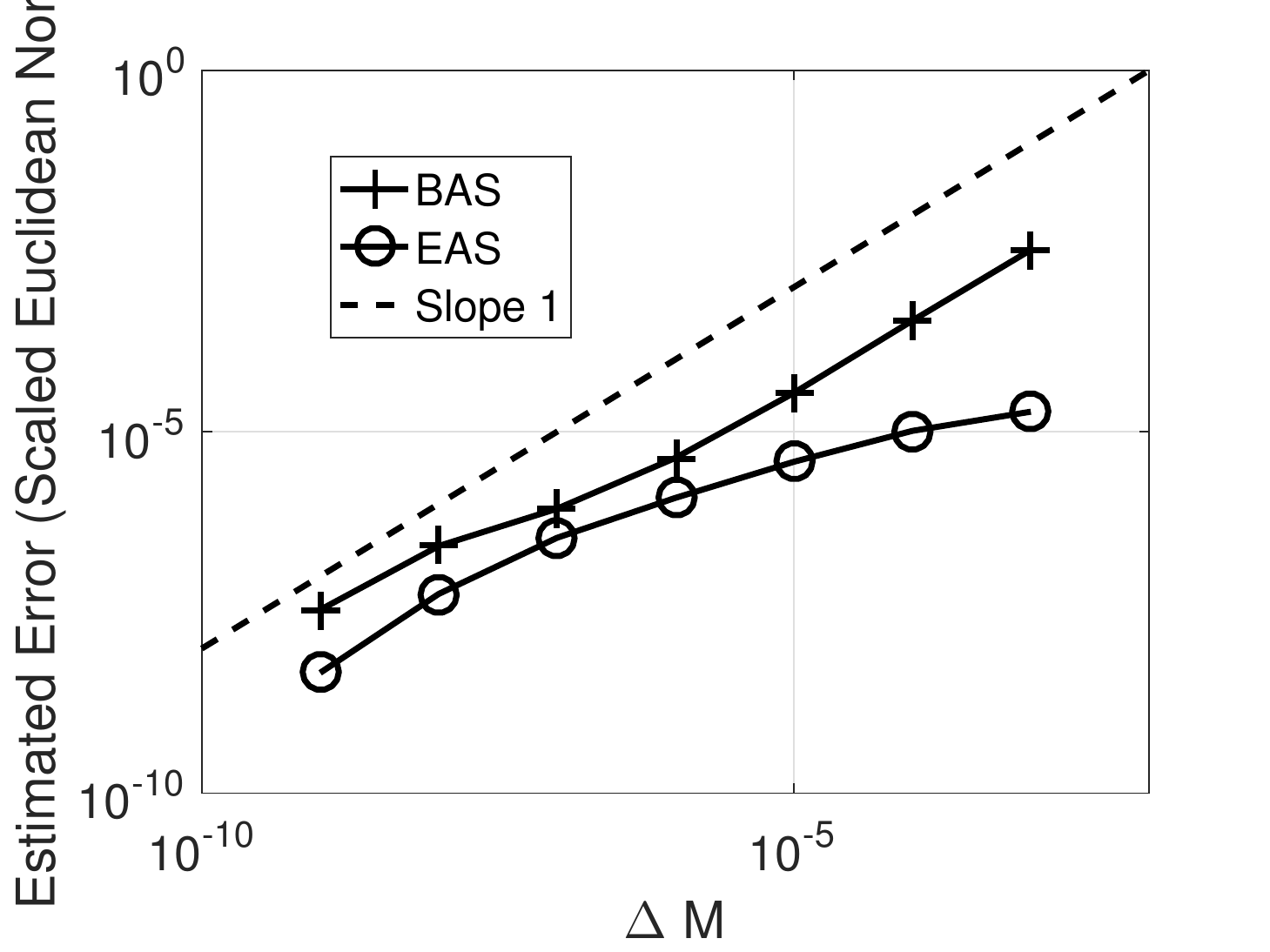}
\end{minipage}
\begin{minipage}[b]{0.45\linewidth}
b) \\
\includegraphics[width=0.99\columnwidth]{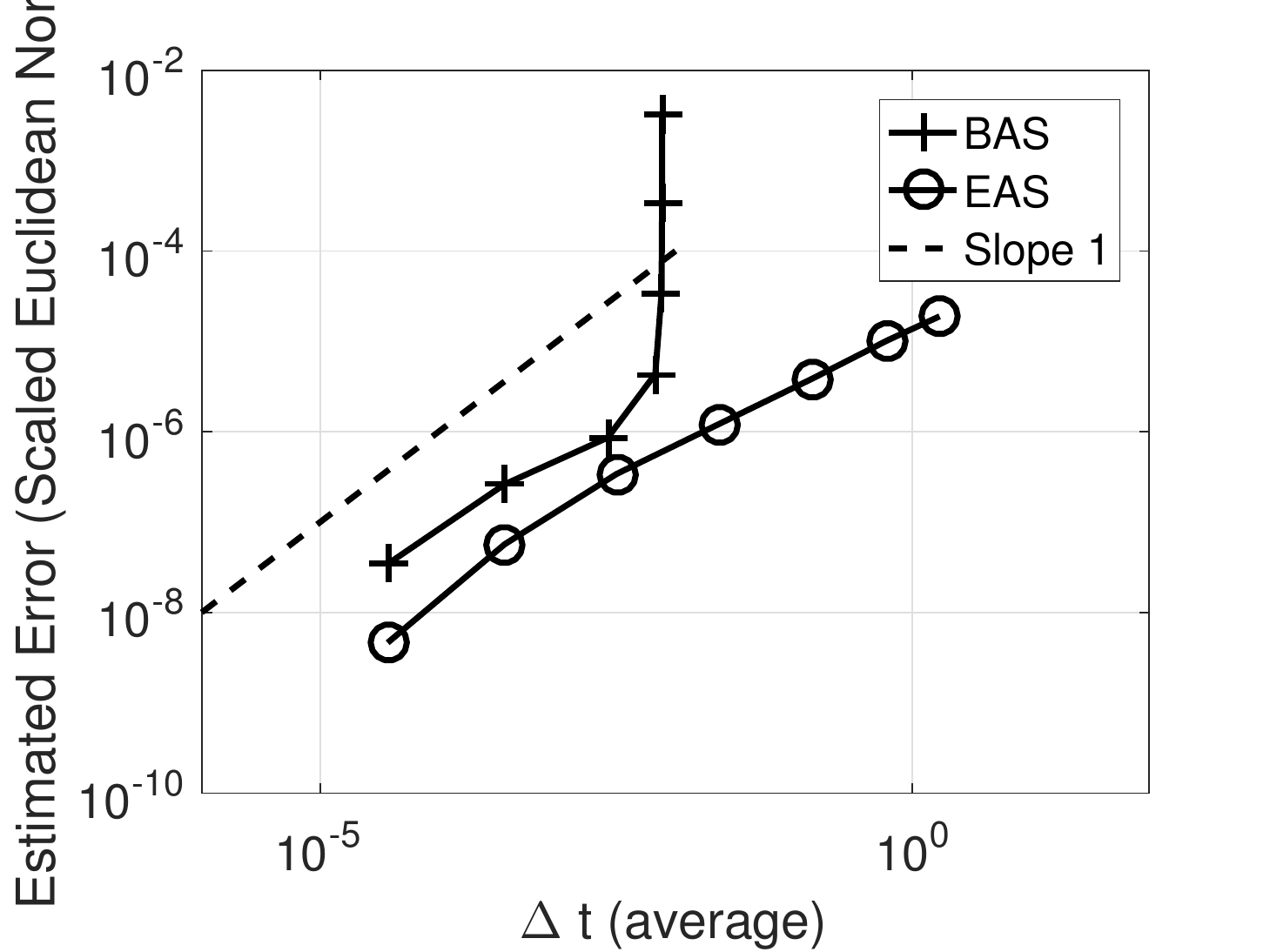}
\end{minipage} 
\begin{minipage}[b]{0.45\linewidth}
c) \\
\includegraphics[width=0.99\columnwidth]{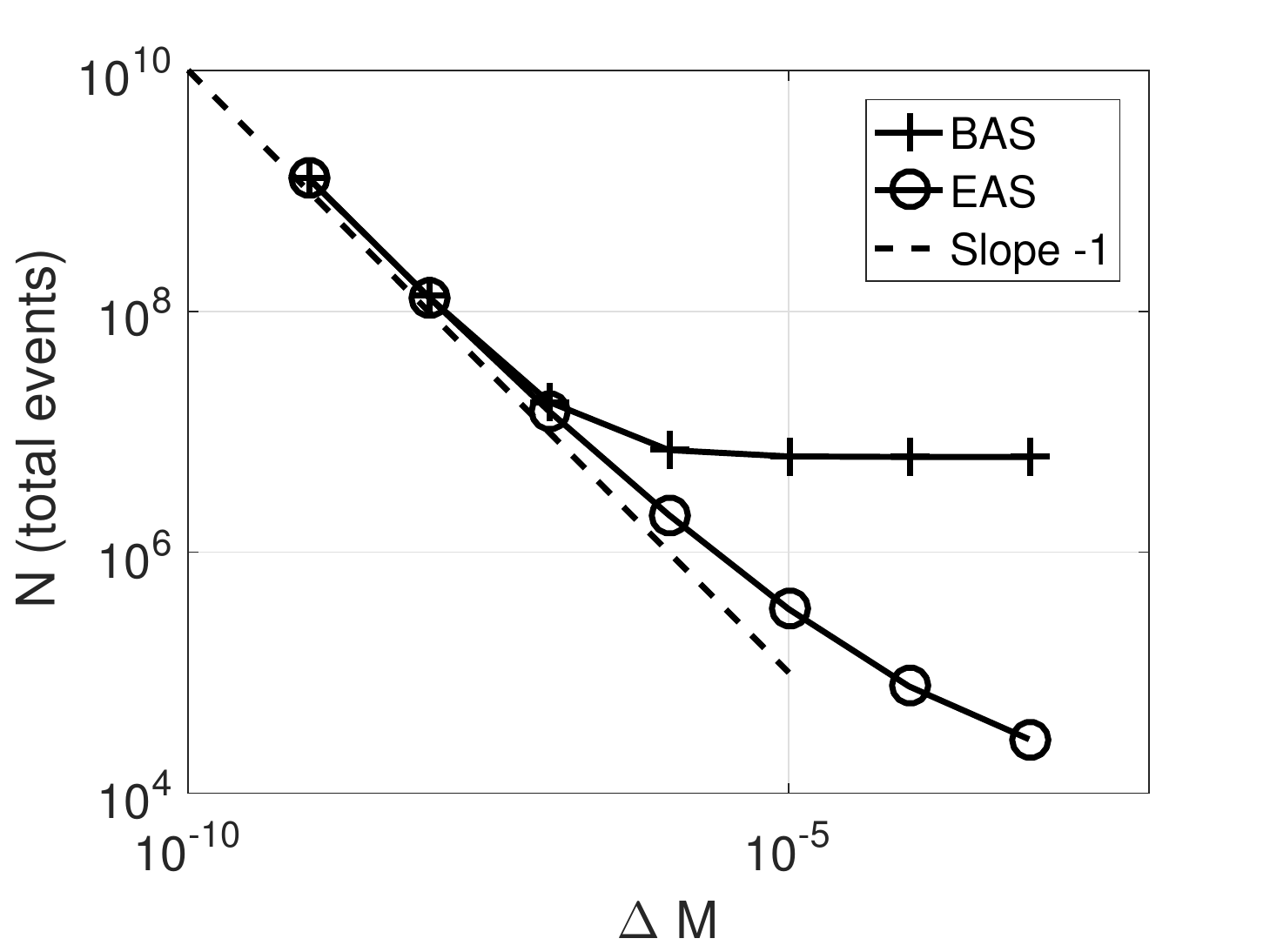}
\end{minipage}
\begin{minipage}[b]{0.45\linewidth}
d) \\
\includegraphics[width=0.99\columnwidth]{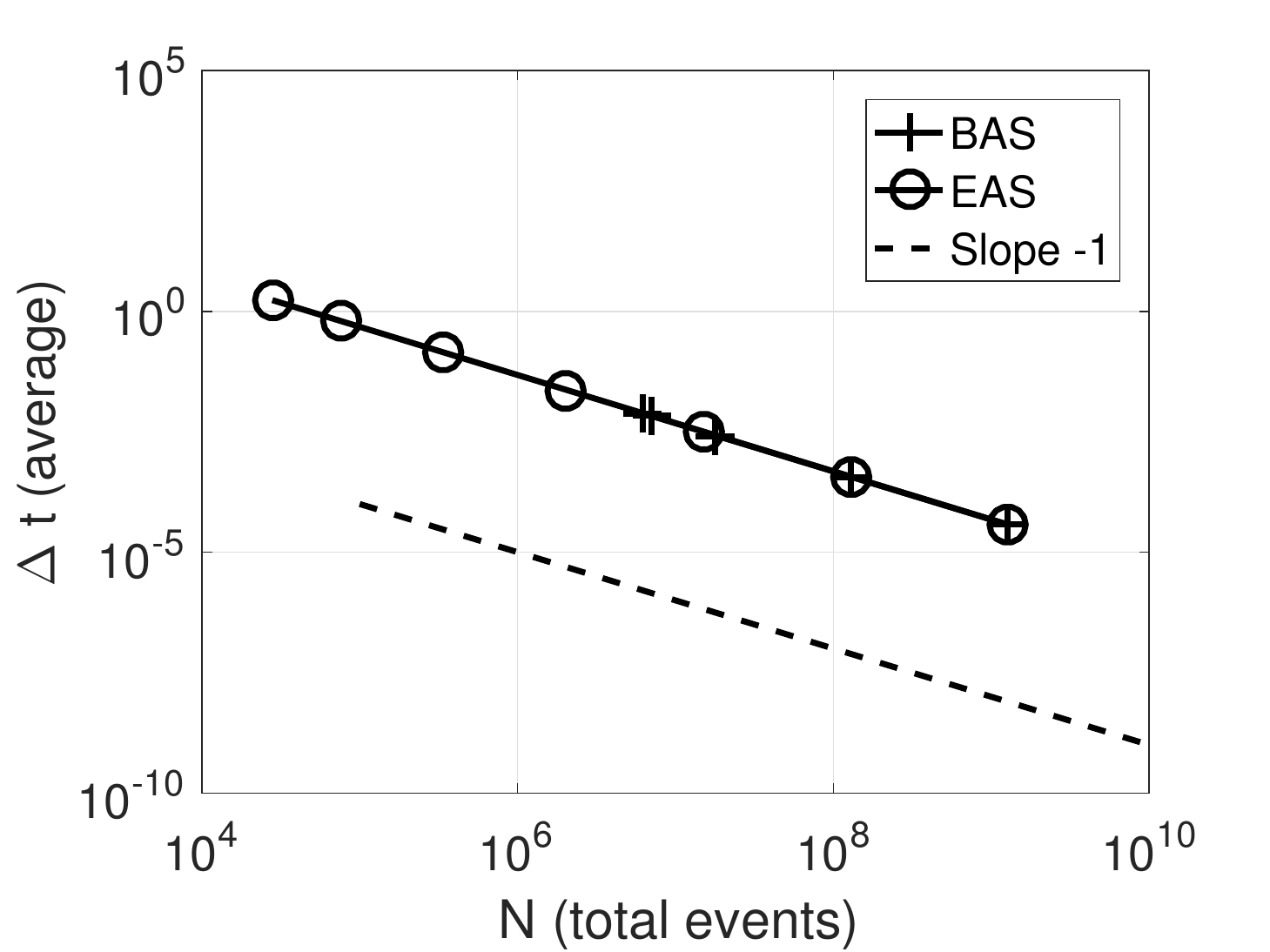}
\end{minipage} 
\caption{Results for the experiment described in \secref{frac_sec}.}
\label{fralang_plotsMain}
\end{figure}

\clearpage

\section{Analysis of EAS}
\label{an_sec_as_1}
Here we perform analysis of the new schemes. First we show theoretically that EAS will never overdraw cells as a result of the scheme's construction, then we present a framework for a convergence analysis of EAS.

It will be seen that a useful property of connection matrices is to re-express the timestep defined by \eqref{utime2} ( i.e., $ \Delta t_{k} = \frac{\Delta M}{|f_{k}|A_k}$). We use the properties of the connection matrix $L_k$ to express the flux across a face $k$ as follows,
$$
|f_{k}|A_k = \frac{|| L_k \mathbf{m}  ||}{\sqrt{2}},
$$
where $|| L_k \mathbf{m} ||$ is the Euclidean norm of $L_k \mathbf{m}  $. The result follows from the fact that the only two nonzero entries of $|| L_k \mathbf{m}  ||$ are $+f_{k}A_k $ and  $-f_{k}A_k$, from \eqref{con_fix_dir}. Then, 
\begin{equation}
\Delta t_{k} = \frac{\sqrt{2} \Delta M}{|| L_k \mathbf{m}  ||}.
\label{dt_eq}
\end{equation}
We make use of \eqref{dt_eq} in the rest of this section.

\subsection{EAS preserves positivity}
\label{pos-sec}

A crucial property of EAS is that it will preserve the positivity of the concentration in cells. BAS by contrast can allow an amount of mass to be transferred between cells such that the concentration in one cell becomes negative - we refer to this phenomenon as overdrawing a cell. See \figref{positivity-fig2} and \figref{positivity-fig3} for observations of this phenomenon in the experiments above.

\begin{figure}[h]
\centering
\begin{minipage}[b]{0.45\linewidth}
a) \\
\includegraphics[width=0.99\columnwidth]{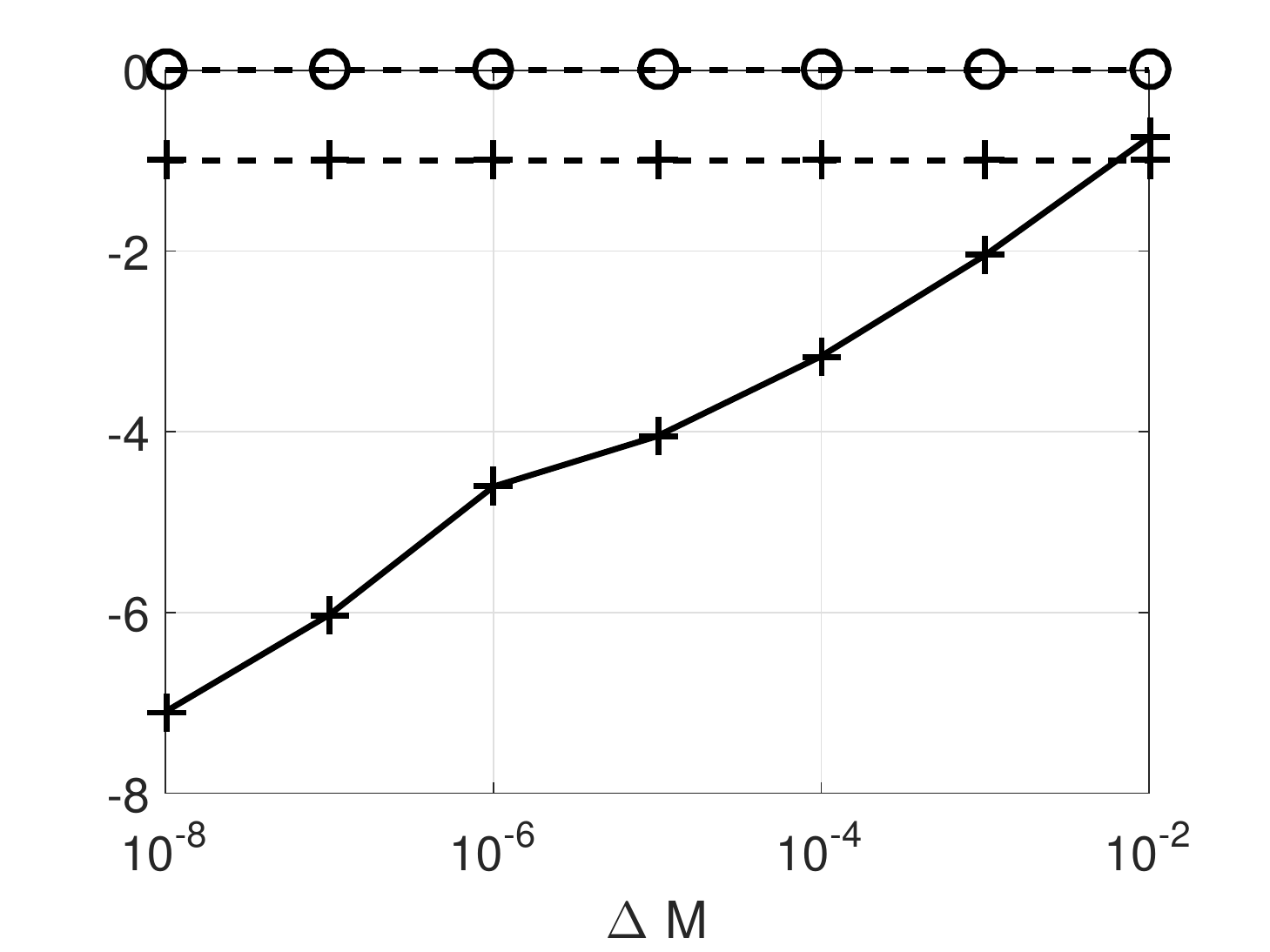}
\end{minipage}
\begin{minipage}[b]{0.45\linewidth}
b) \\
\includegraphics[width=0.99\columnwidth]{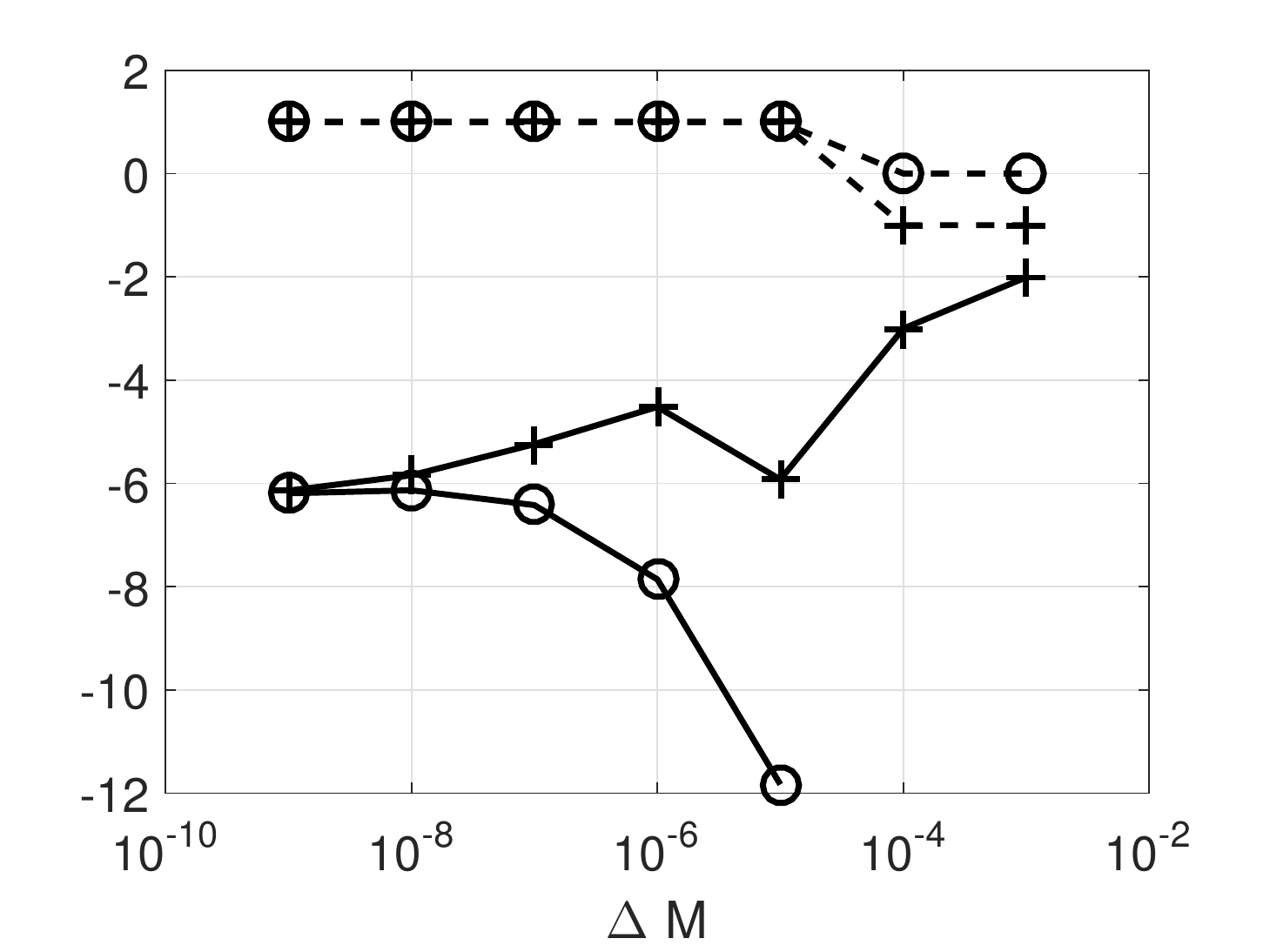}
\end{minipage} 
\begin{minipage}[b]{0.45\linewidth}
c) \\
\includegraphics[width=0.99\columnwidth]{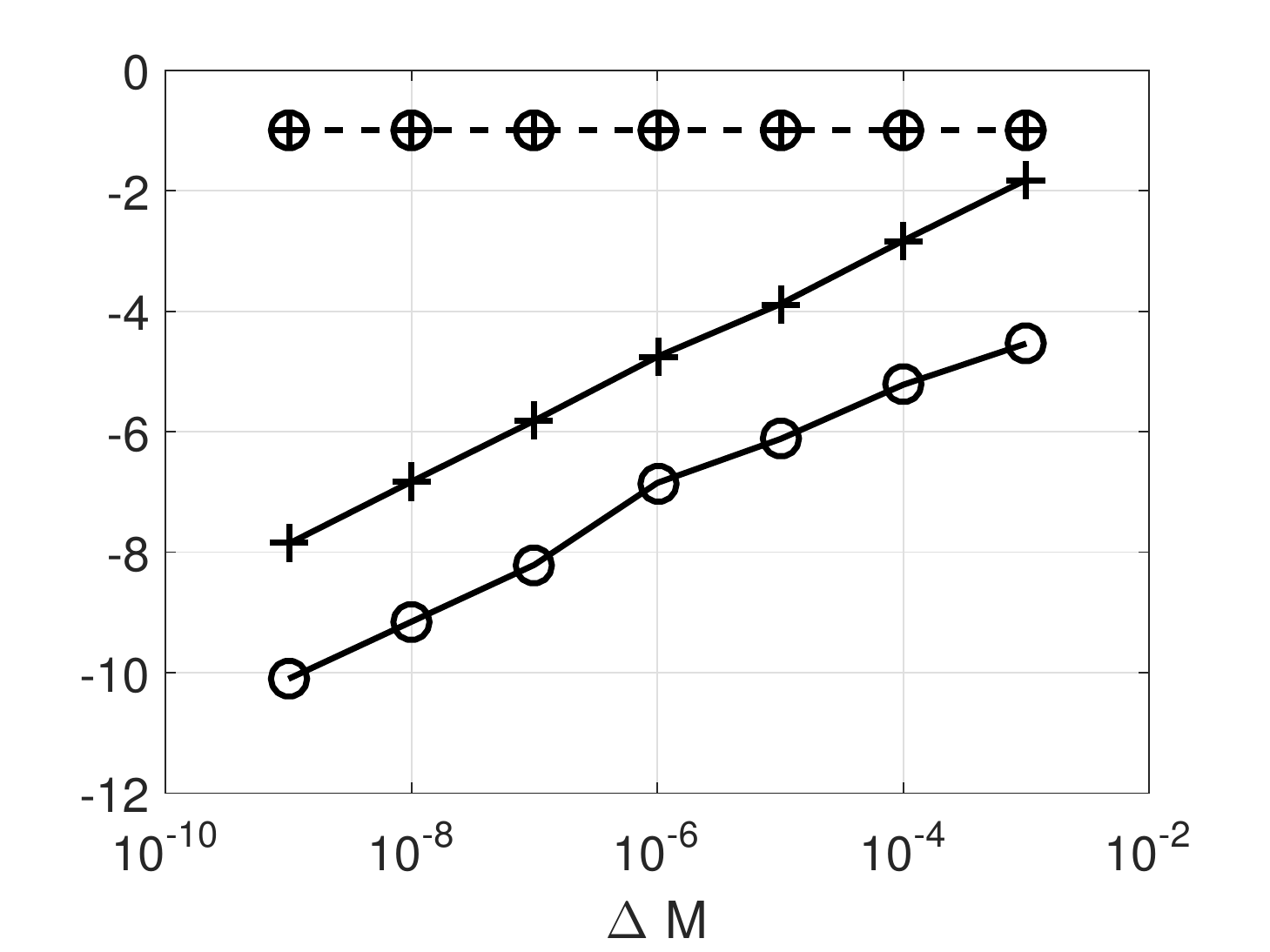}
\end{minipage}
\begin{minipage}[b]{0.45\linewidth}
Legend \\
\includegraphics[width=0.99\columnwidth]{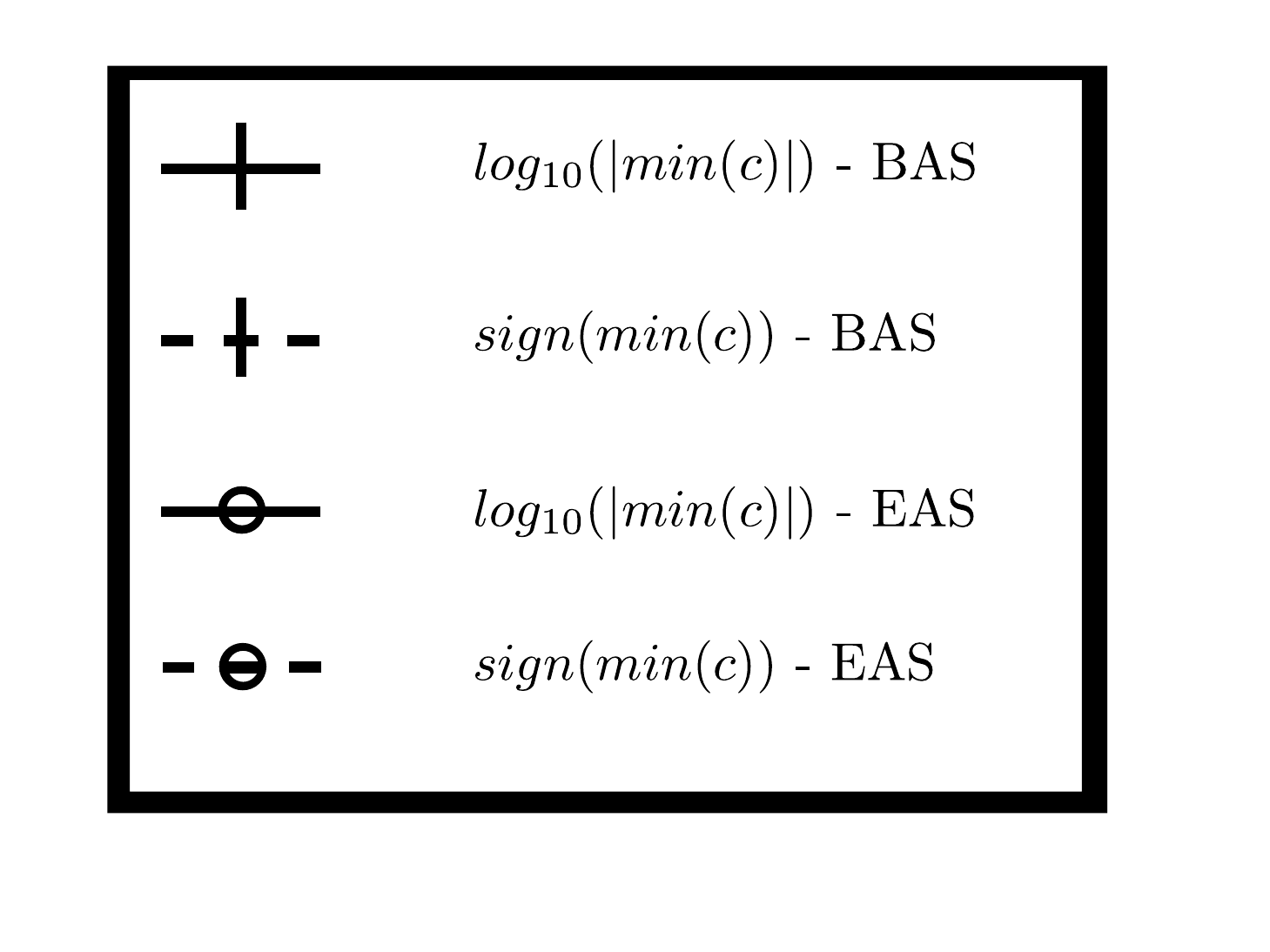}
\end{minipage}
\caption{How EAS preserves positivity and BAS does not. Plot a) corresponds to the experiment in \secref{fracsec}, b) to the experiment in \secref{sf_sec}, and c) to the experiment in \secref{frac_sec}. Note that in a) $min(c) = 0$ for all values for EAS so it cannot be shown on the log-scale plot. a) and b) show that for the first two experiments EAS never reduces $c$ below zero but BAS does. Plot c) shows that EAS does reduce $c$ below zero, due to modifications to allow the reaction term.}
\label{positivity-fig2}
\end{figure}

\begin{figure}[h]
\centering
\begin{minipage}[b]{0.45\linewidth}
a) \\
\includegraphics[width=0.99\columnwidth]{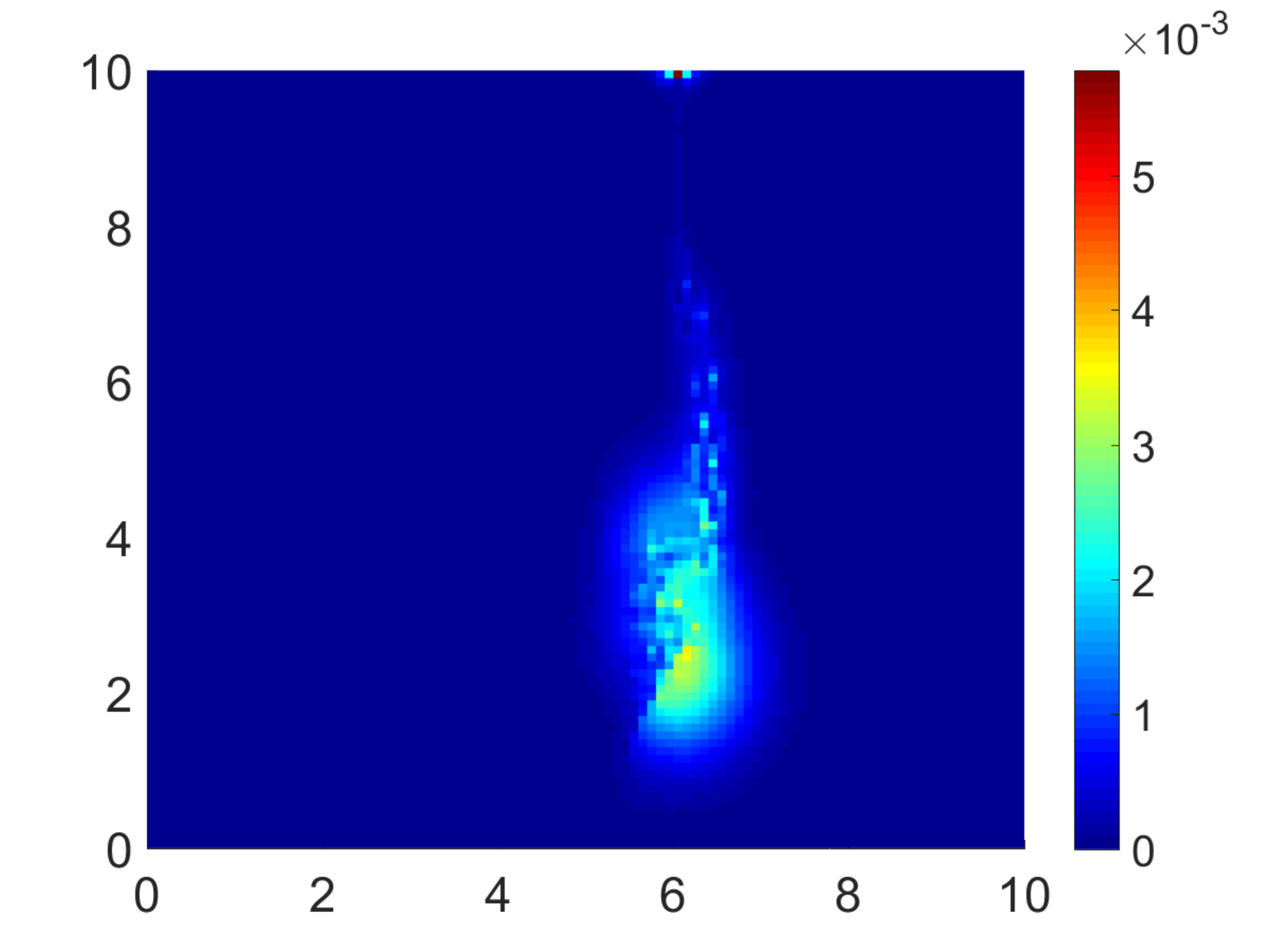}
\end{minipage}
\begin{minipage}[b]{0.45\linewidth}
b)  \\
\includegraphics[width=0.99\columnwidth]{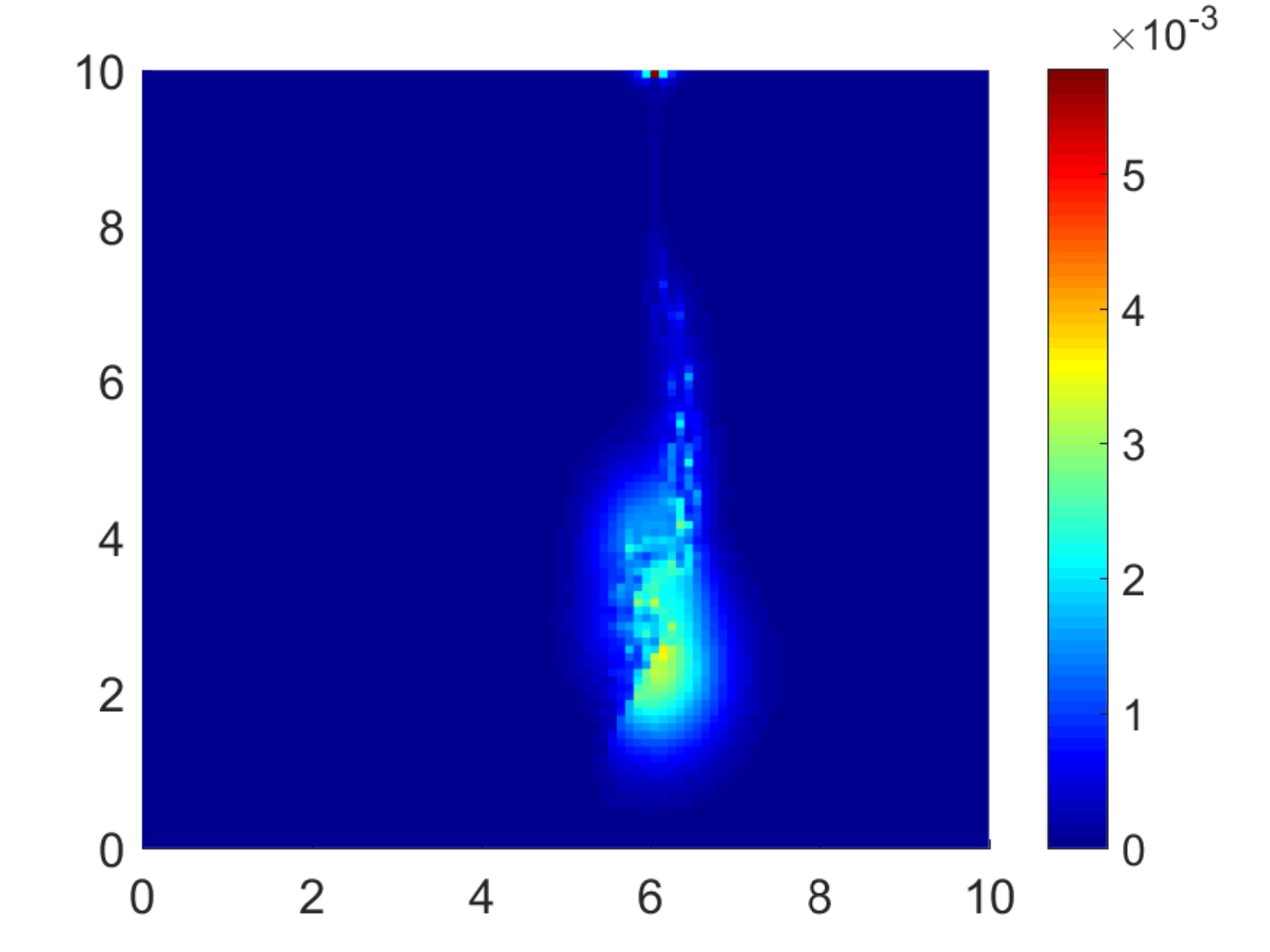}
\end{minipage}
\begin{minipage}[b]{0.45\linewidth}
c) \\
\includegraphics[width=0.99\columnwidth]{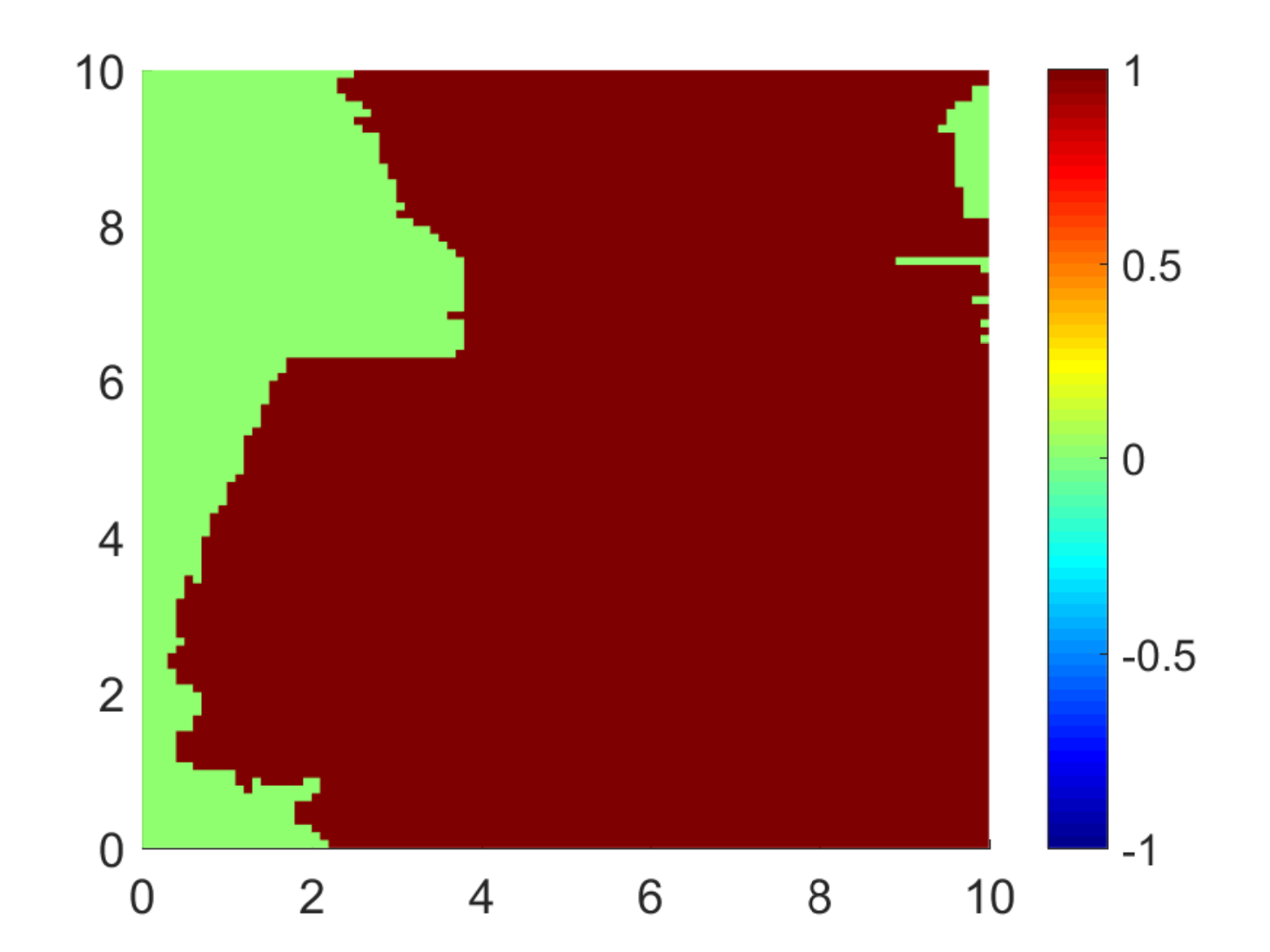}
\end{minipage}
\begin{minipage}[b]{0.45\linewidth}
d) \\
\includegraphics[width=0.99\columnwidth]{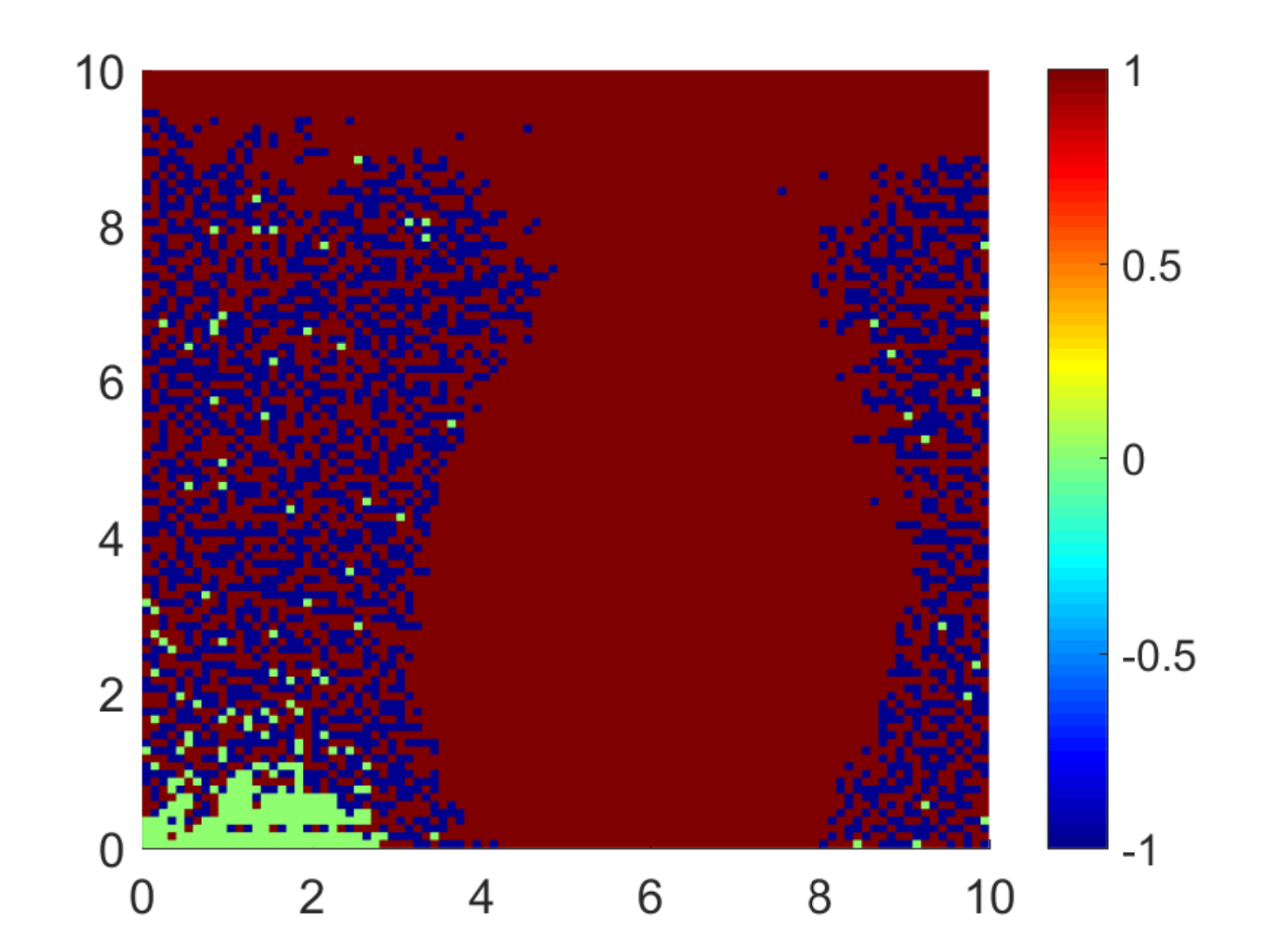}
\end{minipage} 
\caption{Demonstrating the positivity-preserving property of of EAS compared to BAS. a) is produced by EAS for the experiment in \secref{fracsec}. b) is produced by BAS, also for the experiment in \secref{fracsec}. c) shows the sign of the solution in a), and d) shows the sign of the solution in d). While  a) and b) show two similar-looking solves; in plots c) and d)  we observed that BAS allows the solution to become negative while EAS does not.}
\label{positivity-fig3}
\end{figure}

A theoretical proof that EAS will not overdraw cells is as follows. We can show that if the $j_1$, $j_2$ entries in $\mathbf{x}$ (i.e. $x_{j_1} $ and $x_{j_2} $) are both non-negative, then the $j_1$, $j_2$ entries in $e^{sL_k}\mathbf{x}$ are also non-negative. This is a corollary to Lemma \ref{phi_con_mats}.
\begin{corollary}
With the same assumptions as in Lemma \ref{phi_con_mats}, the $j_1$, $j_2$ entries in $e^{sL_k}\mathbf{x}$ are given by $x_{j_1}+ \Delta \hat{M}_{k, j_1, j_2} (s)$ and $x_{j_2}-\Delta M_e(s)$ respectively. Both of these are non-negative if $x_{j_1} $ and $x_{j_2}$ are non-negative.
\label{exp_mat_dec}
\end{corollary}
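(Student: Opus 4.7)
The plan is to extract the $j_1$ and $j_2$ coordinates of $e^{sL_k}\mathbf{x}$ directly from Lemma \ref{phi_con_mats}, then to recognise the resulting $2\times 2$ map on $(x_{j_1},x_{j_2})$ as a (column-)stochastic transformation, so that non-negativity is preserved by a convex-combination argument. First I would combine \eqref{phi_con_mats_eq} with the sign convention on $\hat{\mathbf{z}}_k$ ($+1$ at index $j_1$, $-1$ at index $j_2$) to read off
\begin{align*}
(e^{sL_k}\mathbf{x})_{j_1} &= x_{j_1} + \Delta \hat{M}_{k,j_1,j_2}(s),\\
(e^{sL_k}\mathbf{x})_{j_2} &= x_{j_2} - \Delta \hat{M}_{k,j_1,j_2}(s),
\end{align*}
which already yields the formula part of the claim. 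All other coordinates of $\mathbf{x}$ are left unchanged by $e^{sL_k}$ since $\hat{\mathbf{z}}_k$ has support only on $\{j_1,j_2\}$.

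The essential step is a simplification of the scalar coefficient appearing in $\Delta \hat{M}_{k,j_1,j_2}(s)$. Using the closed form $\varphi_1(z)=(e^z-1)/z$, I would rewrite
\[
\alpha(s) \;:=\; s\,\varphi_1(-s(a_k+b_k)) \;=\; \frac{1-e^{-s(a_k+b_k)}}{a_k+b_k},
\]
which is well defined provided $a_k+b_k>0$ (the degenerate case $a_k=b_k=0$ is trivial, since then $L_k=0$ and $e^{sL_k}\mathbf{x}=\mathbf{x}$). For $s\geq 0$, one has $0\leq \alpha(s)(a_k+b_k)\leq 1$, so in particular $\alpha(s)a_k\leq 1$ and $\alpha(s)b_k\leq 1$. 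Substituting $\Delta \hat{M}_{k,j_1,j_2}(s)=\alpha(s)(b_k x_{j_2}-a_k x_{j_1})$ into the two coordinates above and regrouping gives
\begin{align*}
(e^{sL_k}\mathbf{x})_{j_1} &= (1-\alpha(s)a_k)\,x_{j_1}+\alpha(s)b_k\,x_{j_2},\\
(e^{sL_k}\mathbf{x})_{j_2} &= \alpha(s)a_k\,x_{j_1}+(1-\alpha(s)b_k)\,x_{j_2}.
\end{align*}
Since $a_k,b_k\geq 0$ by construction and the bounds above hold, every coefficient on the right is non-negative; together with $x_{j_1},x_{j_2}\geq 0$ this closes the argument. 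As a by-product, the coefficients sum to one in each column, reflecting the mass conservation built into $L_k$.

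There is no genuinely hard step here: the content sits entirely in the identity for $\alpha(s)$ and the resulting bound $\alpha(s)(a_k+b_k)\leq 1$, both of which are routine once recognised but easy to miss if one works directly from the power series definition of $\varphi_1$. The only minor care point is the degenerate case $a_k+b_k=0$, which is handled by inspection as above.
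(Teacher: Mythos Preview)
Your proof is correct. Both you and the paper begin from the same key identity
\[
s\,\varphi_1(-s(a_k+b_k)) \;=\; \frac{1-e^{-s(a_k+b_k)}}{a_k+b_k},
\]
but the arguments then diverge in organisation. The paper keeps the quantity $\Delta M_e(s)$ intact and performs a case split on the sign of $b_k x_{j_2}-a_k x_{j_1}$, bounding $x_{j_1}+\Delta M_e(s)$ and $x_{j_2}-\Delta M_e(s)$ separately in each case. You instead regroup the two updated coordinates as a $2\times 2$ linear map with entries $1-\alpha(s)a_k$, $\alpha(s)b_k$, $\alpha(s)a_k$, $1-\alpha(s)b_k$, observe that $\alpha(s)(a_k+b_k)\in[0,1]$ forces every entry to be non-negative, and conclude by a convex-combination argument. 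This buys you a case-free proof and makes the column-stochastic (mass-conserving) structure explicit, which is a nice conceptual bonus; the paper's version is slightly more elementary in that it never names the underlying matrix. Both implicitly use $s\geq 0$, which is the only regime relevant to the scheme.
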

\begin{proof}
The first claim follows simply from \eqref{phi_con_2} and the form of $\hat{\mathbf{z}}_k$. For the second claim, consider $\Delta M_e(s)$ re-written as
$$
\Delta M_e(s) =(1-e^{-s(a_k+b_k)})  \frac{(b_k x_{j_2} -a_k x_{j_1})}{a_k+b_k}.
$$
The $(1-e^{-s(a_k+b_k)}) $ part is a monotonically increasing function, from $0$ when $s=0$ to $1$ as $s \rightarrow \infty$, so it is in $[0,1)$. The other part of $\Delta M_e(s)$ is $\frac{(b_k x_{j_2} -a_k x_{j_1})}{a_k+b_k}$ and can be either positive or negative. We can consider each case separately. \\
If $\frac{(b_k x_{j_2} -a_k x_{j_1})}{a+b} \geq 0$, then
$$
0 \leq \Delta M_e(s) < \frac{(b_k x_{j_2} -a_k x_{j_1})}{a_k+b_k},
$$
and so,
$$
x_{j_1} \leq x_{j_1} +  \Delta M_e(s) < x_{j_1} +  \frac{(b_k x_{j_2} -a_k x_{j_1})}{a_k+b_k},
$$
and
$$
x_{j_2} \geq x_{j_2} -  \Delta M_e(s) > x_{j_2} -  \frac{(b_k x_{j_2} -a_k x_{j_1})}{a_k+b_k} = \frac{a_k(x_{j_1}+x_{j_2})}{a_k+b_k},
$$
so that both $x_{j_1} +  \Delta M_e(s)$ and $ x_{j_2} -  \Delta M_e(s)$ are non-negative. \\
 If $\frac{(b_k x_{j_2} -a_k x_{j_1})}{a_k+b_k} \leq 0$ then,
$$
0 \geq \Delta M_e(s) > \frac{(b_k x_{j_2} -a_k x_{j_1})}{a_k+b_k},
$$
leading to
$$
x_{j_1} \geq x_{j_1} + \Delta M_e(s) > x_{j_1} + \frac{(b_k x_{j_2} -a_k x_{j_1})}{a_k+b_k} = \frac{b_k( x_{j_1} + x_{j_2} ) }{a_k+b_k},
$$
and
$$
x_{j_2} \leq x_{j_2} - \Delta M_e(s) < x_{j_2} - \frac{(b_k x_{j_2} -a_k x_{j_1})}{a_k+b_k}.
$$
Thus we have that $x_{j_1} +  \Delta M_e(s)$ and $ x_{j_2} -  \Delta M_e(s)$ are non-negative in either case.
\end{proof}

\clearpage

\subsection{Assumptions of Parameter Relations}
\label{asum-sec}
Identifying relationships between the parameters such as $\Delta M$, $N$, and individual event timesteps $\delta t_i$ would be essential for a full analysis of the schemes presented here. Some relationships are heavily implied by our numerical results in \secref{numerical}; others suggest themselves from the form of the scheme, but there are subtleties to consider. We first state three assumptions that we base on the form of the schemes, which we use in the \secref{an_EAS} in a sketch proof of convergence for EAS, and then discuss these assumptions. 
\begin{assumption}
There exists a $d_1 > 0$ such that
$
\delta t_n = O \left(\Delta M^{d_1} \right), \mbox{  } n=1, \ldots, N.
$
\label{as_dm_dt}
\end{assumption}
That is, for an event number $n$, if $\Delta M$ is small enough that event $n$ occurs (i.e., that $n<N$, see Assumption \ref{as_N_dm}), the event timestep decreases as $\Delta M$ decreases. This is the observed in the numerical results (plot d) in all of Figures \ref{frac-flow-plotsMain}, \ref{sf_plotsMain} and \ref{fralang_plotsMain}), and \eqref{dt_eq}, which implies a proportionality between the timestep $\Delta t$ and $\Delta M$. However, we observe that the denominator on the right hand side of \eqref{dt_eq} may have some dependence on $\Delta M$ since the ordering of events may depend on $\Delta M$. This means that $||L_k \mathbf{m}||$ is different for different $\Delta M$ values for given $n$. Ideally we would like to identify a $C$ such that
$$
||L_k \mathbf{m}|| \geq 1/C,
$$
which is the same as claiming that, for any event $n$ and $\Delta M$ value, the flux across the face $k$ \emph{chosen for an event} is bounded below by some constant $1/C$. A face $k$ is chosen for an event due to a combination of low $t_k$ and high flux; this might make the existence of such a bound seem reasonable, but this is not conclusive for all cases. Our numerical results that support the assumption indicate that the exponent has a value around $d_1 = 1$.
\begin{assumption}
The number of events $N$ increases as $\Delta M$ decreases, and there exists a $d_2 >0 $ such that
$
N = O \left(\Delta M^{-d_2} \right).
$
\label{as_N_dm}
\end{assumption}
As well as being implied strongly by our numerical results (plot c) in all of Figures  \ref{frac-flow-plotsMain}, \ref{sf_plotsMain} and \ref{fralang_plotsMain}, this assumption should follow from the construction of the algorithms as long as the initial data is such that there is some activity in the domain (i.e., a nonzero flux). 

Bringing together Assumptions \ref{as_dm_dt} and \ref{as_N_dm} gives a third parameter relation.
\begin{remark}
Given Assumptions \ref{as_dm_dt} and \ref{as_N_dm}, as $N$ increases, the event timestep decreases; there exists  $d_3 >0$ such that
$
\delta t_n = O \left(N^{-d_3} \right), \mbox{  } n=1, \ldots, N.
$
\label{as_dt_N}
\end{remark}
This is supported by combining our numerical results (plot c and d) in all of Figures \ref{frac-flow-plotsMain}, \ref{sf_plotsMain} and \ref{fralang_plotsMain}, for the average $\Delta t$, with $d_3 = 1$.

\subsection{Discusion and Steps Towards Convergence}
\label{an_EAS}
We consider the linear ODE system \eqref{fv_disc}, after dividing each row by $V_k$ to produce an equation for the mass values, i.e., $\frac{d  \mathbf{m}}{dt}=L  \mathbf{m}$. The exact solution is,
\begin{equation}
\mathbf{m}(T) = \exp \left( T\sum_{k=1}^{K}{L_k}  \right) \mathbf{m}(0),
\label{lin_ex_sol}
\end{equation}
where we have expressed $ TL = T\sum_{k=1}^{K}{L_k}$, as a sum of the connection matrices. The approximation produced by EAS after $n$ events is
\begin{equation}
\mathbf{m}_n = \prod_{i=1}^{n}{\exp(\delta t_{i} L_{k_i}  ) }\mathbf{m}(0)  = \exp{(Z_n)}\mathbf{m}(0),
\label{EAS_after_n}
\end{equation}
where $L_{k_i} \in \mathcal{L}$ is the connection matrix chosen for the $i$th event, and $Z_n$ is to be determined. The iterative formula for $\mathbf{m}_n$ is 
\begin{equation}
\mathbf{m}_{n+1} = \exp{ (\delta t_{n+1} L_{k_{n+1}} ) }\mathbf{m}_{n}  = \exp{ (\delta t_{n+1} L_{k_{n+1}} ) }\exp{(Z_n)}\mathbf{m}(0).
\label{EAS_it}
\end{equation}
We can make use of the Baker-Campell-Hausdorff (BCH) formula (see for example, \cite{HS_lie_book}), which states that, for operators $A$, $B$, 
\begin{equation}
\exp{(A)}\exp{(B)} = \exp{(C)},
\label{expabc}
\end{equation}
with 
\begin{equation}
C = A + B + \frac{1}{2}[A,B] + \frac{1}{12}[A,[A,B]] - \frac{1}{12}[B,[B,A]] + \ldots,
\label{Z_BCH}
\end{equation}
where the Lie bracket $[\cdot]$ is defined as $[A,B] = AB - BA$, for $A,B \in \mathbb{R}^{J \times J}$. We have an iterative formula for $Z_n$  from \eqref{EAS_it}, by taking $A= \delta t_{n+1} L_{k_{n+1}}$ and $B=Z_n$ in \eqref{Z_BCH},
\begin{equation}
\begin{split}
Z_{n+1} &= \delta t_{n+1} L_{k_{n+1}} + Z_n + \frac{1}{2}[\delta t_{n+1} L_{k_{n+1}},Z_n] \\
              &+ \frac{1}{12}[\delta t_{n+1} L_{k_{n+1}},[\delta t_{n+1} L_{k_{n+1}},Z_n]] - \frac{1}{12}[Z_n,[Z_n,\delta t_{n+1} L_{k_{n+1}}]] + \ldots.
\end{split}
\label{Zn_BCH}
\end{equation}
An alternative expression of \eqref{Z_BCH} is the Goldberg series \cite{goldberg_BCH}, see also for example \cite{conv_goldberg}. The Goldberg series is a double sum of words made of the operators $A$ and $B$; a word here means a simple multiplicative term, for example $A$, $B$, $ABA$, $AABB$ are all examples of words made from $A$ and $B$. A word of length $i$ is made of $i$ instances of the operators, for example $A$ and $B$ are length one, $ABA$ of length three and $AABB$ length four. There are $2^i$ words of length $i$ that can be made from operators $A$ and $B$ ($3^i$ if there were three operators, and so on). For the purposes of writing a sum over the words, let $W(j,i,A,B)$ be the $j$th word of length $i$ ($j =1, \ldots, 2^i$) made from $A$ and $B$. For example, $W(j,1,A,B)$ could be $A$ or $B$; $ABA$ would be one of the $W(j,3,A,B)$; $AABB$ would be one of the $W(j,4,A,B)$, and so on. Then, Goldberg's exponential series for $C$ in \eqref{expabc} is 
\begin{equation}
C = A + B + \sum_{i=2}^{\infty}{ \sum_{j=1 }^{2^i}{  g(j,i,X,Y) W(j,iX,Y) } }.
\label{goldberg}
\end{equation}
The Goldberg coefficients $ g(j,i,X,Y)$ corresponding to each word are rational numbers, and listings and discussions of the calculations of these can be found in \cite{list_goldberg}. \\
An advantage of EAS is that it the scheme can be written in exponential form \eqref{EAS_after_n}, which lends itself to analysis using the BCH. In the spirit of analysis of symplectic operator splitting schemes \cite{yoshida_symp}, we can attempt to prove convergence of $\mathbf{m}_N$ to $\mathbf{m}(T)$, by proving convergence of $Z_N$ to $ TL = T\sum_{k=1}^{K}{L_k}$, where $N$ is the number of events in the EAS solve (i.e., the number of events after which the scheme has brought the individual time on every face to $T$). We demonstrate here how such an argument may proceed. \\
It is useful to express $Z_n$ from \eqref{EAS_after_n} in a modified form of \eqref{goldberg}. We use words made from the event timesteps instead of operators. Let $\hat{w}(j,i,n)$ be the $j$th word of length $i$, made from elements of the set $\{ \delta t_1, \ldots, \delta t_n \}$. Because the timesteps are scalars they commute, unlike the operator words that make up the Goldburg series. For example, $ABA \neq BAA$, but $\delta t_1 \delta t_2\delta t_1$ and $\delta t_2 \delta t_1\delta t_1$ are both equal to $\delta t_1^2 \delta t_2$. Because of this the number of possible words $\hat{w}(j,i,n)$ is given by the multiset coefficient $\binom{n+i-1}{i}$. We write 
\begin{equation}
Z_n =  \sum_{i=1}^{\infty}{ \sum_{j=1 }^{\binom{n+i-1}{i}}{  \hat{g}(j,i,n) \hat{w}(j,i,n) } },
\label{dtwords_goldburg}
\end{equation}
where the modified coefficients $ \hat{g}(j,i,n) $ are not rational numbers but linear combinations of operator words made from elements of the set $\{ L_{k_1}, \ldots, L_{k_n} \}$. An example is helpful. Consider advancing from $n=1$ to $n=2$. Clearly $Z_1 = \delta t_1 L_{k_1}$, and expanding \eqref{Zn_BCH} gives us
\begin{align*}
Z_2 &= \delta t_1 L_{k_1} + \delta t_2 L_{k_2}  \\
&       + \frac{1}{2} \delta t_1 \delta t_2 L_{k_1} L_{k_2}  - \frac{1}{2}  \delta t_1 \delta t_2 L_{k_2} L_{k_1} \\
&       + \frac{1}{12}\delta t_1^2 \delta t_2 L_{k_1} L_{k_1} L_{k_2}    - \frac{1}{6} \delta t_1^2 \delta t_2 L_{k_1} L_{k_2} L_{k_1} + \frac{1}{12} \delta t_1^2 \delta t_2 L_{k_2} L_{k_1} L_{k_1}  \\
&        - \frac{1}{12 } \delta t_1 \delta t_2^2 L_{k_2} L_{k_2} L_{k_1}  + \frac{1}{6}\delta t_1 \delta t_2^2 L_{k_2} L_{k_1} L_{k_2}  - \frac{1}{12} \delta t_1 \delta t_2^2 L_{k_1} L_{k_2} L_{k_2}   \\
&       + \ldots
\end{align*}
Collecting the timestep words this is
\begin{align*}
Z_2 &= \delta t_1 L_{k_1} + \delta t_2 L_{k_2}  \\
&       +  \delta t_1 \delta t_2 \left(\frac{1}{2} L_{k_1} L_{k_2}  - \frac{1}{2}  L_{k_2} L_{k_1} \right) \\
&       + \delta t_1^2 \delta t_2 \left(  \frac{1}{12} L_{k_1} L_{k_1} L_{k_2}    - \frac{1}{6}  L_{k_1} L_{k_2} L_{k_1} + \frac{1}{12}  L_{k_2} L_{k_1} L_{k_1}  \right) \\
& +    \delta t_1 \delta t_2^2   \left(  - \frac{1}{12 } L_{k_2} L_{k_2} L_{k_1}  + \frac{1}{6} L_{k_2} L_{k_1} L_{k_2}  - \frac{1}{12} L_{k_1} L_{k_2} L_{k_2}  \right) \\
&       + \ldots
\end{align*}
In the form of \eqref{dtwords_goldburg} this is,
$$
Z_2 = \sum_{i=1}^{\infty}{  \sum_{j}^{\binom{2-i-1}{i}}{  \hat{w}(j,i,2) \hat{g}(j,i,2) }  }.
$$
Of the three possible length two words, only $\delta t_1 \delta t_2$ has a nonzero $ \hat{g}$ coefficient, which is $ \left(\frac{1}{2} L_{k_1} L_{k_2}  - \frac{1}{2}  L_{k_2} L_{k_1} \right)$. Of the four possible length three words, only $\delta t_1^2 \delta t_2$ and $\delta t_1 \delta t_2^2$ have nonzero $ \hat{g}$; the $ \hat{g}$ for $\delta t_1^2 \delta t_2$ is $\left(  \frac{1}{12} L_{k_1} L_{k_1} L_{k_2}    - \frac{1}{6}  L_{k_1} L_{k_2} L_{k_1} + \frac{1}{12}  L_{k_2} L_{k_1} L_{k_1}  \right)$, and so on. \\
We consider the length one words in \eqref{dtwords_goldburg}. Let the sum of all the length one words in $Z_n$ be $S_n$. From \eqref{Zn_BCH} it is clear that $S_{n+1} = S_{n} + \delta t_{n+1} L_{k_{n+1}}$, and since $S_1 = Z_1 = \delta t_1 L_{k_1}$, each timestep word in $S_n$ has a coefficient $L_k \in \{L_1, L_2, \ldots , L_K \}$, i.e. from the set of all connection matrices. Thus we can write
\begin{equation}
S_n = \sum_{k=1}^{K}{t_k L_k},
\label{sndef}
\end{equation}
where $t_k$ is the sum of the $\delta t$ for every event which has used $L_k$ as the event operator, i.e., every event on face $k$. Then, $t_k$ is nothing but the face $k$'s individual time. The algorithm guarantees that at event $N$, $t_k = T$ for every face. This leads to 
\begin{equation}
Z_N = T \sum_{k=1}^{K}{ L_k} + \sum_{i=2}^{\infty}{ \sum_{j=1 }^{\binom{N+i-1}{i}}{  \hat{g}(j,i,N) \hat{w}(j,i,N) } }.
\label{S_is_good}
\end{equation}
We can write 
$$
Z_n = S_n + R_n.
$$
Comparing \eqref{S_is_good} to \eqref{lin_ex_sol} leads to 
\begin{conjecture}
$$
R_N =  \sum_{i=2}^{\infty}{ \sum_{j=1 }^{\binom{N+i-1}{i}}{  \hat{g}(j,i,N) \hat{w}(j,i,N) } } \rightarrow 0 \text{ as } N \rightarrow \infty
$$
as $N \rightarrow \infty$.
\label{conv_hope}
\end{conjecture}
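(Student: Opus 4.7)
\textbf{Proof proposal for Conjecture \ref{conv_hope}.}

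The plan is to bound $\|R_N\|$ in operator norm by analyzing the double series term-by-term, using Assumptions \ref{as_dm_dt}--\ref{as_N_dm} to control both the timestep words $\hat{w}(j,i,N)$ and the number of nonzero terms at each level $i$. Once $\|R_N\| \to 0$ is established, continuity of the matrix exponential combined with \eqref{S_is_good} immediately gives $\exp(Z_N) \to \exp(TL)$, so $\mathbf{m}_N \to \mathbf{m}(T)$ and convergence of EAS follows.

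The first step is to establish uniform operator bounds at each level $i \geq 2$. Since every $\hat g(j,i,N)$ arises from iterating \eqref{Zn_BCH} and collecting like timestep words, it is a rational linear combination of $i$-fold nested Lie brackets of matrices in $\{L_{k_1}, \ldots, L_{k_n}\}$. Writing $M = \max_k \|L_k\|$ and using $\|[A,B]\| \leq 2\|A\|\|B\|$ inductively, one obtains an estimate of the form $\|\hat g(j,i,N)\| \leq C_g^i M^i$, where $C_g^i$ absorbs the Goldberg coefficients and the number of commutator shapes of length $i$. Next, by Assumption \ref{as_dm_dt} there is a constant $C_1$ with $\delta t_n \leq C_1 \Delta M^{d_1}$, so every timestep word of length $i$ satisfies $|\hat w(j,i,N)| \leq (C_1 \Delta M^{d_1})^i$. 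Finally, the number of multiset words of length $i$ over $N$ symbols is $\binom{N+i-1}{i} \leq (N+i-1)^i/i!$, which by Assumption \ref{as_N_dm} is $O(\Delta M^{-i d_2}/i!)$. Combining these three bounds, the contribution of level $i$ to $\|R_N\|$ is at most
\begin{equation*}
\frac{(C_g M C_1)^i \, \Delta M^{i(d_1 - d_2)}}{i!}.
\end{equation*}
Summing over $i \geq 2$ gives a tail bound of the form $\exp\!\bigl(\alpha\,\Delta M^{d_1 - d_2}\bigr) - 1 - \alpha\, \Delta M^{d_1 - d_2}$, which vanishes as $\Delta M \to 0$ provided $d_1 > d_2$.

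The hard part will be two-fold. First, the Goldberg coefficients are known to grow rapidly in $i$, and the standard convergence domain of the BCH series requires the accumulated operator norm to remain below a fixed constant (e.g.\ $\log 2$). Iterating BCH as in \eqref{Zn_BCH} builds up $Z_n$ additively and a naive bound $\|Z_n\| \leq \sum_i \delta t_i \|L_{k_i}\|$ is $O(N \Delta M^{d_1}) = O(\Delta M^{d_1 - d_2})$, which is controllable only if $d_1 > d_2$, the same condition needed above. So the analysis hinges on closing the gap between the observed relations ($d_1 \approx d_2 \approx 1$) and the strict inequality required by the crude bound; doing so likely requires exploiting cancellation from the antisymmetry of the commutators (many $\hat g(j,i,N)$ vanish or collapse, as in the $Z_2$ example), or replacing the BCH expansion with a Magnus-type representation whose convergence domain is better understood. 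Second, one must verify that the reordering used to pass from the operator-word Goldberg series \eqref{goldberg} to the scalar-word form \eqref{dtwords_goldburg} is absolutely convergent at each level, so that term-by-term majorization is legitimate; this should follow from the above estimates once $d_1 > d_2$ is secured.
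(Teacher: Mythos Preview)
Your proposal is essentially the same heuristic outline that the paper itself gives---and it is important to note that the paper does \emph{not} prove this statement either: it is explicitly labelled a conjecture, and what follows it in the paper is a sketch, not a proof. Both arguments bound the timestep words of length $i$ by an $i$th power of a small parameter, bound the number of such words by $\binom{N+i-1}{i}=O(N^i)$, and hope for a bound on the operator coefficients $\hat g(j,i,N)$; the paper phrases the resulting condition as $d_3>1$ while you phrase it as $d_1>d_2$, but via Remark~\ref{as_dt_N} these are the same requirement.

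The shared gap, which you correctly flag and the paper also leaves open, is that the numerical evidence gives $d_1\approx d_2\approx 1$ (equivalently $d_3\approx 1$), so the strict inequality your crude majorization needs is not available. Thus neither your argument nor the paper's closes the conjecture. One point of divergence worth noting: the paper's suggested route to sharpen the bound is structural---most products $L_kL_{k'}$ vanish unless $k$ and $k'$ are associated faces, so long operator words are overwhelmingly zero---whereas you propose exploiting commutator cancellation or replacing the iterated BCH by a Magnus-type expansion. Both are plausible directions; neither is carried out. Your explicit bound $\|\hat g(j,i,N)\|\le C_g^iM^i$ also deserves more care: while homogeneity of BCH does force $\hat g(j,i,N)$ to be a combination of length-$i$ operator words, the number of contributing words and the accumulated Goldberg-type rational coefficients from $N$ successive BCH applications must be shown to be uniformly controlled in $N$, which you acknowledge but do not establish.
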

To prove Conjecture \ref{conv_hope} we might invoke Remark \ref{as_dt_N} and assume that a length $i$ timestep word is $O(N^{-d_3i})$. Then as $N \rightarrow \infty$, 
$$
R_N \rightarrow  \sum_{i=2}^{\infty}{ \sum_{j=1 }^{\binom{N+i-1}{i}}{  \hat{g}(j,i,N) O(N^{-d_3i}) } } = \sum_{i=2}^{\infty}{ C(i,N) O(N^{-d_3i})  },
$$
where $C(i,N)$ is some bound on the sum of the $\hat{g}(j,i,N)$ for a given $i$. Proving a desirable bound $C(i,N)$ would require two results. First, we must ensure that no $\hat{g}(j,i,N)$ becomes unboundedly large (in some norm). Second, we must ensure that the number of nonzero $\hat{g}(j,i,N)$ for a given $i$ is sufficiently bounded. \\
Concerning the first required result, the $\hat{g}(j,i,N)$ are linear combinations of operator words and these words have the potential to become arbitrarily long as $N \rightarrow \infty$. This may not be pathological if we consider the actions of connection matrices on each other. Consider the product $L_k L_{k'}$. Unless $L_k$ and $L_k$ are associated faces, the product is an empty matrix. Typically the size of a set of associated faces is much smaller than the size of the set $\mathcal{L}$ of all connection matrices. Thus as the length of an operator word becomes arbitrarily large, the chance of it including a null pairing of connection matrices like this may become extremely high or certain. \\
For the second result, we can immediately place an upper bound on the number of $\hat{g}(j,i,N)$ as $\binom{N+i-1}{i}$, which is $O(N^i)$ as $N \rightarrow \infty$. Assuming we have the first required result, we would then have
\begin{equation}
R_N =  \sum_{i=2}^{\infty}{ O(N^{(1-d_3)i})  },
\label{RforEAS}
\end{equation}
which proves Conjecture \ref{conv_hope} if $d_3 >1$. With \asref{as_N_dm}, this becomes  $O(\Delta M ^{-2d_2(1-d_3)})$ and we have a very rough convergence result that does not consider the ordering of events, or the initial condition. This outlines how a convergence result for EAS may be formulated by taking advantage of the ability to write that scheme as a product of exponentials. A complete result will have to take into account the initial condition and how this affects event ordering, how the scheme handles event ordering in general, and unique properties of the connection matrices. For further discussion see \cite{mythesis}; also note that some additional analysis of BAS is available in \cite{Async1, mythesis}.  \\

\section{Conclusions}
\label{ext_conc}
A new type of face-based, positivity preserving asynchronous numerical method EAS has been presented, which through numerical experiment is observed to converge to a reference solution as a control parameter $\Delta M$ is decreased. We have introduced a way of expressing the new scheme as the repeated action of the matrix exponentials on an initial state vector, and used the formulation to prove the positivity preserving property of the scheme and to outline the framework for a convergence analysis.

\bibliographystyle{unsrt}
\bibliography{Bibliography_initials}

\end{document}